\theoremstyle{plain}
    \newtheorem{thm}[figure]{Theorem}
    \newtheorem{prop}[figure]{Proposition}
    \newtheorem{lemma}[figure]{Lemma}
    \newtheorem{corollary}[figure]{Corollary}
    \newtheorem{subsec}[figure]{}
    \newtheorem*{thma}{Theorem A}
\theoremstyle{definition}
    \newtheorem{defn}[figure]{Definition}
    \newtheorem{notn}[figure]{Notation}
\theoremstyle{remark}
        \newtheorem{remark}[figure]{Remark}
        \newtheorem{example}[figure]{Example}
\renewcommand{\thefigure}{\arabic{section}.\arabic{figure}}
\newenvironment{mysubsection}[2][]
{\begin{subsec}\begin{upshape}\begin{bfseries}{#2.}
\end{bfseries}{#1}}
{\end{upshape}\end{subsec}}
\newcommand{\sect}{\setcounter{figure}{0}\section}
\newcommand{\hs}{\hspace*{5 mm}}
\newcommand{\hsm}{\hspace*{2 mm}}
\newcommand{\vsn}{\vspace{2 mm}}
\newcommand{\vsm}{\vspace{3 mm}}
\newcommand{\wh}{\ -- \ }
\newcommand{\wwh}{-- \ }
\newcommand{\w}[2][ ]{\ \ensuremath{#2}{#1}\ }
\newcommand{\ww}[1]{\ \ensuremath{#1}}
\newcommand{\wwb}[1]{\ \ensuremath{(#1)}-}
\newcommand{\wb}[2][ ]{\ (\ensuremath{#2}){#1}\ }
\newcommand{\wref}[2][ ]{\ (\ref{#2}){#1}\ }
\newcommand{\vin}{\rotatebox[origin=c]{-90}{$\in$}}
\newenvironment{psmallmatrix}
  {\left(\begin{smallmatrix}}
  {\end{smallmatrix}\right)}
\newenvironment{myeq}[1][]
{\stepcounter{figure}\begin{equation}\tag{\thefigure}{#1}}
{\end{equation}}
\newcommand{\mydiagram}[2][]
{\stepcounter{figure}\begin{equation}
     \tag{\thefigure}{#1}\vcenter{\xymatrix{#2}}\end{equation}}
\newcommand{\myodiag}[2][]
{\stepcounter{figure}\begin{equation}
     \tag{\thefigure}{#1}\vcenter{\xymatrix@R=10pt@C=1pt{#2}}\end{equation}}
\newcommand{\mypdiag}[2][]
{\stepcounter{figure}\begin{equation}
     \tag{\thefigure}{#1}\vcenter{\xymatrix@R=-1pt@C=5pt{#2}}\end{equation}}
\newcommand{\myqdiag}[2][]
{\stepcounter{figure}\begin{equation}
     \tag{\thefigure}{#1}\vcenter{\xymatrix@R=5pt@C=10pt{#2}}\end{equation}}
\newcommand{\myrdiag}[2][]
{\stepcounter{figure}\begin{equation}
     \tag{\thefigure}{#1}\vcenter{\xymatrix@R=3pt@C=25pt{#2}}\end{equation}}
\newcommand{\mysdiag}[2][]
{\stepcounter{figure}\begin{equation}
     \tag{\thefigure}{#1}\vcenter{\xymatrix@R=0pt@C=20pt{#2}}\end{equation}}
\newcommand{\mytdiag}[2][]
{\stepcounter{figure}\begin{equation}
     \tag{\thefigure}{#1}\vcenter{\xymatrix@R=0pt@C=-27pt{#2}}\end{equation}}
\newcommand{\myudiag}[2][]
{\stepcounter{figure}\begin{equation}
     \tag{\thefigure}{#1}\vcenter{\xymatrix@R=9pt@C=20pt{#2}}\end{equation}}
\newcommand{\myvdiag}[2][]
{\stepcounter{figure}\begin{equation}
     \tag{\thefigure}{#1}\vcenter{\xymatrix@R=16pt@C=36pt{#2}}\end{equation}}
\newcommand{\mywdiag}[2][]
{\stepcounter{figure}\begin{equation}
     \tag{\thefigure}{#1}\vcenter{\xymatrix@R=15pt@C=26pt{#2}}\end{equation}}
\newcommand{\myxdiag}[2][]
{\stepcounter{figure}\begin{equation}
     \tag{\thefigure}{#1}\vcenter{\xymatrix@R=10pt@C=2pt{#2}}\end{equation}}
\newcommand{\myydiag}[2][]
{\stepcounter{figure}\begin{equation}
     \tag{\thefigure}{#1}\vcenter{\xymatrix@R=10pt@C=12pt{#2}}\end{equation}}
\newcommand{\hra}{\hookrightarrow}
\newcommand{\xra}[1]{\xrightarrow{#1}}
\newcommand{\epic}{\to\hspace{-5 mm}\to}
\newcommand{\lra}[1]{\langle{#1}\rangle}
\newcommand{\up}[1]{\sp{(#1)}}
\newcommand{\bup}[1]{\sp{[{#1}]}}
\newcommand{\lo}[1]{\sb{(#1)}}
\newcommand{\rest}[1]{\lvert\sb{#1}}
\newcommand{\Cat}{\mbox{\sf Cat}}
\newcommand{\Ch}{\mbox{\sf Ch}}
\newcommand{\Chp}{\Ch\sb{+}}
\newcommand{\Path}{\mbox{\sf Incomp}}
\newcommand{\wPath}{\widehat{\Path}}
\newcommand{\Vect}{\mbox{\sf Vect}}
\newcommand{\grV}{\gr\Vect}
\newcommand{\sI}{\mathscr{I}}
\newcommand{\sIu}[1]{\sI\up{#1}}
\newcommand{\hID}{\sI\sb{\Dr}}
\newcommand{\hIDu}[1]{\hID\up{#1}}
\newcommand{\hII}{\sI\sb{\Ind}}
\newcommand{\hIIn}[1]{\sI\bup{#1}\sb{\Ind}}
\newcommand{\sJ}{\mathscr{J}}
\newcommand{\hJD}{\sJ\sb{\Dr}}
\newcommand{\hJI}{\sJ\sb{\Ind}}
\newcommand{\hJIn}[1]{\hJI\bup{#1}}
\newcommand{\hJH}{\sJ\sb{\Hybr}}
\newcommand{\tJ}{\widetilde{\sJ}}
\newcommand{\tJD}{\tJ\sb{\Dr}}
\newcommand{\aci}[1]{{#1}/\sI}
\newcommand{\cib}[1]{\sI/{#1}}
\newcommand{\cibb}[1]{\sI/\!/{#1}}
\newcommand{\acb}[3]{{#1}/{#2}/{#3}}
\newcommand{\aib}[2]{\acb{#1}{\sI}{#2}}
\newcommand{\acbb}[3]{{#1}/{#2}/\!/{#3}}
\newcommand{\aibb}[2]{\acbb{#1}{\sI}{#2}}
\newcommand{\acj}[1]{{#1}/\sJ}
\newcommand{\ajb}[2]{\acb{#1}{\sJ}{#2}}
\newcommand{\bF}{\mathbb{F}}
\newcommand{\tS}{\widetilde{\Sigma}}
\newcommand{\sh}[1]{\tS\sp{#1}}
\newcommand{\Cok}{\operatorname{Cok}}
\newcommand{\colim}{\operatornamewithlimits{colim}}
\newcommand{\Dr}{\operatorname{Der}}
\newcommand{\Der}[1]{\Dr\up{#1}}
\newcommand{\Exp}{\operatorname{Exp}}
\newcommand{\Fib}{\operatorname{Fib}}
\newcommand{\Fil}{\operatorname{Fil}}
\newcommand{\gr}{\operatorname{gr}}
\newcommand{\hocolim }{\operatornamewithlimits{hocolim}}
\newcommand{\ho}{\operatorname{ho}}
\newcommand{\Hom}{\operatorname{Hom}}
\newcommand{\Hybr}{\operatorname{Hyb}}
\newcommand{\Hyb}[1]{\Hybr\up{#1}}
\newcommand{\oHyb}[1]{\overline{\Hybr}\sb{#1}}
\newcommand{\Id}{\operatorname{Id}}
\newcommand{\inc}{\operatorname{inc}}
\newcommand{\Incomp}{\operatorname{Inc}}
\newcommand{\Ind}{\operatorname{Ind}}
\newcommand{\init}{\operatorname{init}}
\newcommand{\Ker}{\operatorname{Ker}}
\newcommand{\Obj}{\operatorname{Obj}}
\newcommand{\Proj}{\operatorname{Proj}}
\newcommand{\term}{\operatorname{term}}
\newcommand{\fD}{\mathfrak{D}}
\newcommand{\fM}[1]{\mathfrak{M}\up{#1}}
\newcommand{\fX}{\mathfrak{X}}
\newcommand{\tX}{\widetilde{\fX}}
\newcommand{\wXn}[1]{\fX\bup{#1}}
\newcommand{\wXi}{\wXn{\infty}}
\newcommand{\fY}{\mathfrak{Y}}
\newcommand{\tY}{\widetilde{\fY}}
\newcommand{\ttY}{\widetilde{\fY'}}
\newcommand{\wY}{\widehat{\fY}}
\newcommand{\wYu}{\tY\bup{2}}
\newcommand{\wYn}[1]{\fY\bup{#1}}
\newcommand{\fZ}{\mathfrak{Z}}
\newcommand{\dt}[1]{d\sp{2}\lo{#1}}
\newcommand{\hdtw}{\hat{d}\sp{2}}
\newcommand{\hdt}[1]{\hdtw\lo{#1}}
\newcommand{\cC}{\mathcal{C}}
\newcommand{\eK}{\mathcal{K}}
\newcommand{\Css}{C\sb{\ast\ast}}
\newcommand{\pd}[1]{\partial\sb{#1}}
\newcommand{\tpd}[1]{\widetilde{\partial}\sb{#1}}
\newcommand{\Hs}{H\sb{\ast}}
\newcommand{\DD}[4]{D\lo{#1}{#2}(#3)\sb{#4}}
\newcommand{\DDY}[3]{\DD{#1}{\fY}{#2}{#3}}
\newcommand{\EE}[4]{E\lo{#1}{#2}(#3)\sb{#4}}
\newcommand{\EEY}[3]{\EE{#1}{\fY}{#2}{#3}}
\newcommand{\cF}[1]{\mathcal{F}\lo{#1}}
\newcommand{\hF}[1]{\Fil\sb{#1}}
\newcommand{\cK}[1]{\eK\lo{#1}}
\newcommand{\oK}{\bar{K}}
\newcommand{\hK}{\widehat{K}}
\newcommand{\hpsi}{\widehat{\psi}}
\newcommand{\hPsi}{\widehat{\Psi}}
\newcommand{\rhu}[1]{\rho\sp{#1}}
\newcommand{\sigu}[1]{\sigma\sp{#1}}
\begin{document}

\title{Homotopy types of diagrams of Chain Complexes}
\author{David Blanc}
\address{Department of Mathematics, University of Haifa, POB 3338, 3103301 Haifa, Israel}
\email{blanc@math.haifa.ac.il}
\author{Surojit Ghosh}
\address{Department of Mathematics, Indian Institute of Technology, Roorkee, Uttarakhand-247667, India}
\email{surojit.ghosh@ma.iitr.ac.in; surojitghosh89@gmail.com}
\author{Aziz Kharoof}
\address{Department of Mathematics, University of Haifa, POB 3338, 3103301 Haifa, Israel}
\email{azez.kh@gmail.com}
\date{\today}
\subjclass{Primary 18N60, secondary 18G35, 18G40, 55P99, 55S20, 55T99}
\keywords{chain complexes, directed diagrams, linear $\infty$-categories, higher homotopy invariants, spectral sequences}

\begin{abstract}
We study the homotopy theory of diagrams \w{\fX:\sI\to\Ch} of chain complexes over a field indexed by a finite poset $\sI$, and show that it can be completely described in terms of appropriate diagrams of graded vector spaces.
\end{abstract}
\maketitle

\setcounter{section}{-1}
\sect{Introduction}

Higher homotopy theory studies the higher homotopy invariant structure of $\infty$-cate\-go\-ries, model categories,
and  other similar frameworks. Such invariants include differentials in spectral sequences, higher order homotopy and cohomology operations, \w[-,]{A\sb{\infty}}  \w[-,]{L\sb{\infty}} and \ww{E\sb{n}}-structures on objects, and homotopy types of diagrams, all subsumed in principle under the  $k$-invariants of an \wwb{\infty,1}category (see \cite{DKSmitO,HNPrasA,BMeadA}).
However, in general it is impossible to reduce this higher structure to a $1$-categorical description.

One approach to exhibiting a collection of such higher invariants explicitly
is to use the homotopy categories of the functor categories \w{\cC\sp{\sI}} for all
\w{\sI\in\Cat} to encode the invariants of a model category $\cC$, say.  This idea was first
realized in Heller's notion of a \emph{hyperfunctor}:
a sort of functor from the $2$-category \w{\Cat} of all small categories to (large)
categories (see \cite{HellH}). Grothendieck later introduced the term \emph{derivators}
for the version encoding also (co)limits. Recently, Renaudin showed (in \cite{RenaP})
how reasonable model categories can be embedded in the category of derivators, and this was
extended to quasi-categories in \cite{ArliH} and \cite{FKRoveM} (with respect to
the simplicial enrichment of \cite{MRaptKD}).

In this paper, we provide an explicit approach to analyzing the homotopy types of
diagrams, in what appears to be the simplest non-trivial example:
the category \w{\Ch} of non-negatively graded chain complexes over
 a field. We consider only the ``linear $\infty$-category'' version
(enriched in unbounded chain complexes), and we restrict ourselves to indexing
categories $\sI$ which are finite partially ordered sets.  We are able to produce
an explicit (if complicated) description of the various homotopy categories of
\w{\Ch\sp{\sI}} only by making these very special assumptions. Nevertheless, we
believe that these may point the way to studying more general situations.

\begin{example}\label{egdifftwo}
The first example of a higher order invariant for chain complexes is as follows:
denote by \w{K(V,n)} the chain complex with homology $V$ concentrated in degree 
\w[,]{n\geq 0} and consider the commuting square in the homotopy category of chain complexes depicted in \wref[(a).]{eqsimplesec}  A cofibrant version of this diagram is provided by the solid outer square in \wref[(b),]{eqsimplesec} where \w{\binom{V'}{V}} denoting a chain complex with $V$ in degree $0$, \w{V'} in degree $1$, and the differential $d$  is an isomorphism (except for \w[,]{\binom{V_1\oplus V_2}{V}} where it is the fold map).  
The dashed lines represent the structure maps for the (homotopy) colimit of the top left corner, with $\psi$ the induced map into the target 
\w[.]{K(V,1)} Note that its domain is also weakly equivalent to \w[,]{K(V,1)} and rank of \w{H\sb{1}(\psi)} determines the weak homotopy type of the diagram.

\myqdiag[\label{eqsimplesec}]{
K(V,0) \ar[dd] \ar[rr] && 0\ar[dd] &&&& 
\binom{0}{V} \ar[dd] \ar[rrr] & & & \binom{V\sb{1}}{V} \ar@{-->}[dd]
\ar@/^2ex/[dddr]\\
&&&&&&&\\
0 \ar[rr] && K(V,1) &&&& \binom{V_2}{V} \ar@{-->}[rrr] \ar@/_2ex/[rrrrd] &&& \binom{V_1\oplus V_2}{V} \ar@{-->}[dr]^{\psi} & \\
&&&&&& &&&&  \binom{V_1\oplus V_2}{V} \simeq\binom{V}{0}\\
&(a)&&&&&&&(b)
}
\end{example}

We call the map $\psi$ in this example the \emph{value} of the \ww{d\sp{2}}-differential for 
the solid square \wref[(b).]{eqsimplesec} This is an example of the \emph{secondary invariants} for  a diagram of chain complexes, and as we shall see in Section \ref{cssd} below, the secondary data for the type of diagrams we study here can be described completely in terms of such differentials (and primary information in graded vector spaces).

What is more surprising, perhaps, is that \emph{all} the higher order structure
can be reduced to secondary structure (in more complicated diagrams),
so that it is ultimately describable in terms of such simple \ww{d\sp{2}}-differentials.
The reader is invited to look at the detailed example of how this is done for
what one would expect to be a \ww{d\sp{3}}-differential (involving a $3$-dimensional cube),
in \S \ref{sthreecube} below (see in particular \S \ref{sthreediff}).

\begin{mysubsection}{Overall strategy}
\label{soas}
We determine the homotopy type of the given diagram \w{\fX:\sI\to\Ch} by means of two sequences of diagrams: the \emph{derived diagrams} \w{\Der{k}\fX} \wb{k\geq 0} in 
graded vector spaces, which we think of as being purely formal, and the \emph{hybridizations} \w{\Hyb{k}\fX} \wb{k\geq 0} in the category \w{\Chp} (see \S \ref{snac} below): these are diagrams having maps of graded vector spaces (possibly of positive degree) up to dimension $k$, and maps of chain complexes in higher dimensions.  These two types of diagrams are constructed inductively, beginning with 
\w{\Der{0}\fX=H\sb{0}\fX} and \w[.]{\Hyb{0}\fX=\fX} The hybridization \w{\Hyb{k}\fX} is obtained from \w{\Hyb{k-1}\fX} through several intermediate stages:

\begin{enumerate}
\renewcommand{\labelenumi}{(\alph{enumi})~}
\item Assume by induction that we have obtained from \w{\fX} a \wwb{k-1}hybrid diagram 
\w[,]{\fY=\Hyb{k-1}\fX:\sIu{k-1}\to\Chp} with the property that $\fX$ itself can be recovered up to weak equivalence from $\fY$.
\item From $\fY$ we construct its derived diagram \w[,]{\Der{k}\fX:\hIDu{k}\to\grV}
 taking values in graded vector spaces (allowing maps of positive
degree). The indexing categories \w{\hIDu{k}} are themselves partially ordered sets describable in terms of the original indexing category $\sI$.
\item  From \w{\Der{k}\fX} we obtained the \emph{expanded} derived diagram 
\w{\wY:\sJ\sb{\Ind}\bup{2}\to\Chp} from the \ww{\Ind}-completion of \w{\sJ:=\sIu{k-1}} (which is no longer formal, and whose \wwb{k+1}truncation is weakly equivalent to that of the original $\fY$).
\item Finally, from $\wY$ and $\fY$ we produce the next stage in the induction, the \emph{reconstructed} $k$-hybrid diagram \w[.]{\Hyb{k}\fX:\sIu{k}\to\Chp}
\end{enumerate}
\end{mysubsection}

In terms of these constructions \wh the details of which are given in Section \ref{cdd} \wh our main result states:

\begin{thma}
Given a diagram \w{\fX:\sI\to\Ch} in non-negatively graded chain complexes indexed by a finite partially ordered set, for each \w{k\geq 0} the associated derived diagram \w[,]{\Der{k}\fX:\hIDu{k}\to\grV} in graded vector spaces, determines the homotopy type of the $k$-truncation of $\fX$.
\end{thma}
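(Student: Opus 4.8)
The plan is to induct on $k$, running the four-stage construction (a)--(d) of \S\ref{soas}, whose details are supplied in Section~\ref{cdd}. The base case $k=0$ is essentially definitional: since $\Der{0}\fX=H\sb{0}\fX$ and, over a field, the $0$-truncation of an object of $\Ch$ is determined up to quasi-isomorphism (and naturally so) by its $0$-th homology, the $\Vect$-valued functor $H\sb{0}\fX$ together with its structure maps is a complete invariant of the $0$-truncated diagram. This also installs the starting datum $\Hyb{0}\fX=\fX$ of the induction.

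In the inductive step I would separate two assertions. The first is \emph{invariance}: that $\fX\mapsto\Der{k}\fX$ carries weak equivalences to isomorphisms of derived diagrams. Using the inductive hypothesis, according to which $\fY:=\Hyb{k-1}\fX$ already recovers $\fX$ through its $(k-1)$-truncation (property (a)), it suffices to verify that the formation of $\Der{k}\fX:\hIDu{k}\to\grV$ from $\fY$ in step (b) respects weak equivalences. I would reduce this to the naturality of homology and of the secondary $d\sp{2}$-differentials of Example~\ref{egdifftwo}; the latter are defined as morphisms in a homotopy category and so transform correctly under weak equivalence by construction. The second assertion is \emph{reconstruction}: that $\Der{k}\fX$ determines $\Hyb{k}\fX$ up to weak equivalence, and that $\Hyb{k}\fX$ in turn determines the $k$-truncation of $\fX$.

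For the reconstruction the decisive input is the property isolated in step (c): the expanded derived diagram $\wY$, assembled \emph{purely from the formal data} $\Der{k}\fX$ over the $\Ind$-completion of $\sJ:=\sIu{k-1}$, has $(k+1)$-truncation weakly equivalent to that of $\fY$. Granting this, step (d) compares $\wY$ with $\fY$ and extracts from the comparison exactly the degree-$k$ formalization, producing $\Hyb{k}\fX:\sIu{k}\to\Chp$. Since $\fY$ recovers $\fX$ through level $k-1$ while the comparison supplies the missing level-$k$ data, the reconstructed hybrid will agree with $\fX$ through homological degree $k$; applying the truncation functor objectwise then reads off the $k$-truncation and closes the induction.

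The principal obstacle is precisely the claim underlying (c)--(d): that passing to $\wY$ built from formal data costs nothing up to the $(k+1)$-truncation. Two difficulties must be met. Combinatorially, I would need an explicit description of the indexing posets $\hIDu{k}$ and $\sJ\sb{\Ind}\bup{2}$ in terms of $\sI$, together with a check that the $\Ind$-completion keeps the relevant diagrams within a tractable class. Conceptually, I would have to prove the reduction advertised in the Introduction, namely that every higher $d\sp{r}$-obstruction ($r\geq 3$) is already recorded by secondary differentials inside the larger diagram $\wY$. The natural way to do this is through the telescope device implicit in the $\Ind$-completion, which resolves each higher attaching map into an honest $d\sp{2}$-differential in an enlarged yet still poset-indexed diagram, so that the $(k+1)$-truncated homotopy type is reconstituted from secondary data alone --- exactly as in the worked three-cube example of \S\ref{sthreecube} (see in particular \S\ref{sthreediff}).
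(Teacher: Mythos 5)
Your overall architecture is the paper's: induction on $k$, with the hybridizations \w{\Hyb{k}\fX} as the carrier of the induction and the derived diagrams \w{\Der{k}\fX} as the formal output, and you have correctly located the crux in stages (c)--(d). But what you present is an outline that defers exactly the step where all the work lives. The claim that the expanded derived diagram \w{\wY=\Exp\fY'} \wwh built purely from the formal data of \w{\fY'} over \w{\hJIn{2}} \wwh has \wwb{k+1}truncation weakly equivalent to that of the \ww{\Ind}-completion \w{\wYn{2}} of $\fY$ is Proposition \ref{pexptrun}, and the paper proves it not by any ``telescope device'' but by an explicit seven-step construction of a map \w{\Phi:\tau\sb{k+1}\wYn{2}\to\wY} together with a section $\Omega$. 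That construction depends on a system of non-canonical but compatible splittings, chosen by a double induction on the filtration of the poset: one splits \w{H\sb{0}\fY(\alpha\sp{i})} as in \wref{eqsplitone} into the surviving classes, the subspaces \w{\oK\sp{\alpha}} of classes dying along a \emph{unique} path (whose cones, the summands \w[,]{\DDY{s}{\gamma}{\ast}} are sent to zero because they can contribute no secondary data), and the incomparable-pair kernels (whose cones, the summands \w[,]{\EEY{s}{\gamma}{\ast}} are matched isomorphically with the formal cones \w{C\tilde{K}\sp{\alpha\sp{i}}\sb{(\gamma,\delta)}} placed in \w{\wY} by \wref{eqfone} and \wref[).]{eqftwo} Without producing these splittings and verifying they can be propagated coherently along all chains to the terminal objects, the assertion that ``passing to $\wY$ costs nothing up to the \wwb{k+1}truncation'' is unproven, and your naturality/invariance remarks do not substitute for it.

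Two further points. First, the conceptual burden you assign to a single inductive step \wwh proving that every \w{d\sp{r}} for \w{r\geq 3} is recorded by secondary differentials in $\wY$ \wwh is misplaced: one step only needs to capture the passage from degree $k$ to \w[,]{k+1} and the collapse of higher-order data to \ww{d\sp{2}}-data is an emergent consequence of iterating, not a hypothesis to be discharged inside the step. Second, your stage (d) glosses over a genuine verification: the \wwb{k+1}hybrid approximation \w{\oHyb{k+1}\fY} is a pushout of \w{\fY''} and \w{\wYu\lra{k+1}} along $\fZ$, and the proof of Theorem \ref{thybrdztn} must check that the \wwb{k+1}equivalence $\Omega$ (zero in degrees \w[)]{>k+1} and the covering map \w{\iota\sb{k+1}} (zero in degrees \w[)]{<k+1} are compatible in the single overlapping degree \w[;]{k+1} this uses the canonical identification of \w{Z\sb{k+1}} of the homotopy colimit over \w{\ajb{\alpha}{\Gamma}} with the colimit of \wref[,]{eqdergamma} plus the splittings from Proposition \ref{pexptrun} applied to \w{(\wYn{2})\lra{k+1}} to align the two copies of \w[.]{H\sb{k+1}\wYn{2}(\alpha)} Neither of these appears in your plan.
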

\noindent See Theorem \ref{thybrdztn} below\vsm.

Thus the sequence \w{(\Der{k}\fX)\sb{k=0}\sp{\infty}} encodes the weak homotopy type
of the original diagram $\fX$. 

\begin{notn}\label{snac}
We denote by \w{\Vect} the category of vector spaces over a fixed ground field $\bF$, by
\w{\grV} that of non-negatively graded objects over \w{\Vect} (with linear maps
of degree \w[),]{\geq 0} by \w{\Ch} that of non-negatively graded chain complexes
over \w[,]{\Vect} and by \w{\Chp} the extension of \w{\Ch} allowing also maps of
positive degree (possibly non-homogeneous). Note that \w{\grV} embeds in \w{\Chp}
as a full subcategory, and we occasionally further embed \w{\Vect} in \w{\grV} in dimension $0$. 

The \emph{cone} \w{CX} on a chain complex $X$ and  the \emph{suspension} \w{\Sigma X} are defined as usual (see
\cite[\S 1.5]{WeibHA}), and thus by induction we define \w{C^k X:= C(C^{k-1}X)} and 
\w[.]{\Sigma^k X:= \Sigma (\Sigma^{k-1}X)} The \emph{reduced suspension} 
\w{\tS X} is given by shifting $X$ upwards by one degree.

A finite partially ordered set $\sI$ is called a \emph{lattice}
if it has sets \w{\cF{0}:=\{\alpha\sb{\init}\sp{i}\}\sb{i=1}\sp{n\sb{\init}}} of weak
\emph{initial objects} (that is, minima) and  \w{\{\omega\sb{\term}\sp{j}\}\sb{j=1}\sp{n\sb{\term}}} of
a weak \emph{terminal objects} (maxima). Such a lattice has a finite filtration
\w{\cF{0}\subseteq \cF{1}\dotsc\subseteq \cF{N}=\sI}
given by distance from \w[.]{\cF{0}}

For any \w{\alpha,\beta\in\Obj(\sI)} in such a lattice, we denote by \w{\cib{\beta}} the sub-poset of all (weak) predecessors of $\beta$ (including $\beta$ itself), by \w{\cibb{\beta}} the sub-poset of strict predecessors, and by \w{\aci{\alpha}} the sub-poset of all (weak) successors of $\alpha$ (including $\alpha$ itself). 
If \w[,]{\alpha<\beta} the categories \w{\aib{\alpha}{\beta}} and \w{\aibb{\alpha}{\beta}} are defined analogously.
Similarly for \w{\aib{\alpha}{\Gamma}} and \w{\aibb{\alpha}{\Gamma}} when 
\w{\Gamma=\{\gamma\sb{1},\dotsc,\gamma\sb{m}\}} is a collection of objects such that \w{\alpha<\gamma\sb{i}} for each \w[.]{1\leq i\leq m}
\end{notn}

\begin{mysubsection}{Organization}
\label{sorg}
Section \ref{cscc} provides background and establishes terminology on the category of diagrams of chain complexes. Section \ref{cssd} analyzes the secondary structure of such diagrams. Section \ref{cdd} defines the derived diagram and studies its basic properties, and proves Theorem A. 
Section \ref{cetoi} provides a template for the full description of two infinite sequences of 
diagrams, while Section \ref{css} explains how to apply our methods to the spectral sequence of a filtered chain complex.  Finally, Section \ref{cfd} indicates how one might deal with more general diagrams.
\end{mysubsection}

%
%
\sect{Diagrams of chain complexes}
\label{cscc}

Our main object of study in this paper are diagrams of chain complexes \w{\fX:\sI\to\Ch} indexed by
a lattice $\sI$ (see \S \ref{snac}). Although our results can ultimately be placed in the context of ``linear $\infty$-categories'' \wh that is, categories enriched in (unbounded) chain complexes \wh the work itself is most easily carried out in the framework of the projective model structure on  \w{\Ch\sp{\sI}} (see \cite[\S 11.6]{HirM}), induced from the usual model structure on \w{\Ch} (see \cite[\S 7]{DSpaH}). First,  we recall the few necessary ingredients we need on the  model structures on diagrams in chain complexes:

\begin{defn}\label{dmincof}
A diagram \w{\fX:\sI\to\Ch} as above is called \emph{minimally cofibrant} if
\begin{enumerate}
\renewcommand{\labelenumi}{(\alph{enumi})~}
\item For each \w[,]{\alpha\in\Obj(\sI)} \w{\fX(\alpha)} can be split (non-canonically) as a direct sum of
complexes of the form \w{K(V,m)=\tS\sp{m}V} (with $V$ in dimension $m$ and $0$
elsewhere) or \w{CK(V,m)} (which has copies of $V$ in dimensions $m$ and \w[,]{m+1}
and $0$ elsewhere, with \w[),]{\pd{m+1}=\Id\sb{V}} for some
\w{V\in\Vect} and \w[.]{m\geq 0}
\item For every summand of \w{\fX(\beta)} of the form \w[,]{CK(V,m)} there is a
  (non-identity) map \w{\phi:\alpha\to\beta} in $\sI$ such that \w{\fX(\phi)} induces
  the inclusion \w{K(V,m)\hra CK(V,m)} for some summand \w{K(V,m)} in
  \w{\fX(\alpha)} (so in particular this can only happen if $\beta$ is not initial).
\item For each non-initial \w[,]{\beta\in\Obj(\sI)} the structure map
\w{\colim\sb{\cibb{\beta}}\,\fX\to\fX(\beta)} is monic. 
\end{enumerate}
\end{defn} 

\begin{mysubsection}{The $\Ind$-completion}
\label{sindcomp}
For any cofibrant diagram \w[,]{\fX:\sI\to\Ch} we expect the extra information
(beyond its \emph{homotopy diagram} \w[)]{\Hs\fX} to be encoded by its
\ww{\Ind}-\emph{completion} \wh that is, the diagram obtained by repeatedly adding
formal colimits $\check{\beta}$ for all sub-indexing categories 
\w{\aibb{\alpha}{\beta}} in $\sI$ to obtain \w[,]{\hII} and then extending $\fX$ to a colimit-preserving functor \w[.]{\wXi:\hII\to\Ch} 
The weak homotopy types of $\fX$ and \w{\wXi} determine each other uniquely.
Note, however, that it is quite difficult to extract this extra information from \w{\wXi} directly; our main goal in this paper is to provide a more explicit inductive description
of this data.

The usual \ww{\Ind}-completion is defined iteratively, by adding formal colimits, then colimits of the new diagrams formed, and so on.  However, for our purposes we shall be mostly interested only in the first stage, where we only add colimits of finite directed  sub-diagrams of $\sI$ (multiple pushouts), and stop there, yielding 
\w[.]{\wXn{2}:\hIIn{2}\to\Ch} 
\end{mysubsection}

\begin{defn}\label{dtrunc}
The $k$-\emph{truncation} functor \w[,]{\tau\sb{k}\fX:\sI\to\Chp} applied to a diagram \w[,]{\fX:\sI\to\Chp} is given by
\begin{myeq}\label{eqtrunc}
[\tau\sb{k}\fX(\alpha)]\sb{i}~:=~\begin{cases}
  \fX(\alpha)\sb{i} & \text{if}\ i<k\\
  \Cok(d\sb{k+1}:\fX(\alpha)\sb{k+1}\to\fX(\alpha)\sb{k}) & \text{if}\ i=k\\
  0 & \text{if}\ i>k,
  \end{cases}
\end{myeq} 
\noindent with the natural fibration \w[.]{p\sb{k}:\fX\to\tau\sb{k}\fX}

The \emph{\wwb{k-1}connected cover} \w{\fX\lra{k}} of $\fX$ has
\begin{myeq}\label{eqconnc}
  [\fX\lra{k}(\alpha)]\sb{i}~:=~\begin{cases}
  \fX(\alpha)\sb{i} & \text{if}\ i>k\\
  Z\sb{k}\fX(\alpha):=\Ker(d\sb{k}:\fX(\alpha)\sb{k}\to\fX(\alpha)\sb{k-1}) &
  \text{if}\ i=k\\
  0 & \text{if}\ i<k,
  \end{cases}
\end{myeq} 
\noindent with the natural cofibration \w[.]{\iota\sb{k}:\fX\lra{k}\to\fX}

We have a natural homotopy (co)fibration sequence
\mydiagram[\label{eqcofibs}]{
  H\sb{k+1}\fX \cong \tau\sb{k+1}\fX\lra{k}
  \ar[rr]\sp-{\tau\sb{k+1}\iota\sb{k}} &&
  \tau\sb{k+1}\fX \ar[r]\sp{p'\sb{k+1}} & \tau\sb{k}\fX \ar[r]\sp(0.4){q\sb{k}} &
  \Sigma H\sb{k+1}\fX
}
\noindent in \w[.]{\Ch\sp{\sI}}
\end{defn}

\begin{defn}\label{dformal}
A diagram \w{\fX:\sI\to\Chp} is $k$-\emph{formal} if
\w{\fX(\alpha)\sb{i}\cong H\sb{i}\fX(\alpha)} for all \w{i<k} and \w[,]{\alpha\in\sI} so \w{\iota\sb{k}:\fX\lra{k}\to\fX} is an isomorphism in dimension $k$,  and thus
\w{\iota\sb{k}} has a canonical retraction \w[.]{\rho\sb{k}:\fX\to\fX\lra{k}}
We allow the maps of \w{\tau\sb{k}\fX} to be of positive degree in \w{\grV}
(as long as they land in dimensions \w[),]{<k} but require \w{\fX\lra{k}} to land
in the usual \w[.]{\Ch} 

The diagram \w{\fX:\sI\to\Chp} is called $k$-\emph{hybrid} if it is $k$-formal,
and \w{\fX\lra{k}:\sI\to\Ch} is minimally cofibrant. 
\end{defn}

\begin{mysubsection}{Primary structure of a diagram}
\label{spsd}
The underlying \emph{primary structure} of a diagram \w{\fX:\sI\to\Ch} as above is given by  
\w[.]{\Hs\fX:\sI\to\grV} Note that this suffices to determine the \emph{primary homotopy type}
\w{\pi\sb{0}\fX} of $\fX$ in\w[,]{(\ho\Ch)\sp{\sI}} namely, the \emph{isomorphism type} of 
\w[,]{\Hs\fX} but describing the latter succinctly involves delicate questions of quiver representation
theory, which we shall not attempt to address here (see \cite{GabrU}).
\end{mysubsection}

\begin{remark}\label{roas}
As noted in \S \ref{soas}, our determination of the homotopy type of the given diagram \w{\fX:\sI\to\Ch} is by means of two sequences of diagrams: the derived diagrams \w{\Der{k}\fX} \wb{k\geq 0} in 
graded vector spaces, and the hybridizations \w{\Hyb{k}\fX} \wb{k\geq 0} in chain complexes.

Ideally, we would have liked our approximation at level $n$ to contain all the $n$-th
order information about $\fX$, starting with \w{\Hs\fX} globally encoding all the primary data.  However, this requires taking more seriously the view of \w{\Ch} as
a ``linear \wwb{\infty,1}category'' \wh that is, a category enriched in (unbounded) chain complexes, including a version of $k$-truncation of the enrichment which captures
the idea of the \emph{$k$-stem} of an unbounded complex (see \cite{BBlaS} and compare
\cite{BBChorT}). In particular, we need to keep track of the various ways higher 
truncations may map to lower ones, as well as the fact that 
\w[.]{\Hom(\Sigma A,B)\cong\Omega\Hom(A,B)} Thus we prefer the more pedestrian but simpler approach of using an induction on the truncations of the chain complexes themselves (which eventually captures all of the same information, but at a slower pace).
\end{remark}

%
%
\sect{Secondary structure of diagrams}
\label{cssd}
Our analysis of the higher order structure in diagrams of chain complexes begins with the secondary 
structure.  As we shall see, not only can this be described explicitly, it is in fact all that is needed to capture all the higher structure, by means of our inductive process.

The basic example of secondary structure arises from the fact that the homotopy pushout of two maps from \w{K(V,k)} to zero objects is \w[,]{K(V,k+1)} which yields the example described in \S \ref{egdifftwo}. For the general case, we need to understand how such pairs of zero maps can be related to each other.

\begin{mysubsection}{Pairs of objects}
\label{spairs}
The general form of sub-diagram needed to describe the full secondary structure will consist of a collection of ``zero squares'' as 
\wref[(a),]{eqsimplesec} all with the same source (say, \w[).]{K(V,k)} 

To analyze this situation, assume given a partially ordered set $\sI$ and a diagram 
\w{\fX:\sI\to\Ch} as above. We fix a specific dimension \w{k\geq 0} to work in, and for any \w{\alpha<\beta} in $\sI$, let
$$
K\sp{\alpha}\sb{\beta}=K\sp{\alpha}\sb{\beta}(\fX):=\Ker(H\sb{k}\fX(\alpha)\to H\sb{k}\fX(\beta))~.
$$

We denote by \w{\Incomp(\sI)} the set of \emph{incomparable} objects  in $\sI$ \wh that is, pairs \w{\{\gamma,\delta\}} of objects for which \w{\gamma\not\leq\delta} and \w[.]{\delta\not\leq\gamma} For any \w{\{\gamma,\delta\}} in 
\w[,]{\Incomp(\aci{\alpha})} let 
$$
K\sp{\alpha}\sb{\{\gamma,\delta\}}=K\sp{\alpha}\sb{\{\gamma,\delta\}}(\fX):=K\sp{\alpha}\sb{\gamma}(\fX)\cap K\sp{\alpha}\sb{\delta}(\fX)
$$
(which we shall usually abbreviate to 
\w[,]{K\sb{\{\gamma,\delta\}}} and so on).

Let \w{\aib{\alpha}{\{\gamma,\delta\}}} be the subcategory of $\sI$ consisting of
$\alpha$, $\gamma$, and $\delta$, and the unique paths between them. The (homotopy) colimit of the restriction of $\fX$ to 
\w{\aib{\alpha}{\{\gamma,\delta\}}} is a (homotopy) pushout.
If we further restrict $\fX$ (in degree $k$) to
\w[,]{K\sb{\{\gamma,\delta\}}} this homotopy pushout equals
\w[.]{\Sigma K\sb{\{\gamma,\delta\}}}
 
Thus if \w{\beta>\gamma,\delta} is any join of an incomparable pair 
\w[,]{\{\gamma,\delta\}} we have an induced map 
 \w{\psi=\psi\sb{(\gamma,\delta;\beta)}:\Sigma K\sb{\{\gamma,\delta\}}\to
   H\sb{k+1}\fX(\beta)} as in \wref[(b),]{eqsimplesec}
 which records the fact that we have (possibly non-trivial) secondary data for $\fX$ restricted to \w{\aib{\alpha}{\beta}} in degree \w[.]{k+1}
\end{mysubsection}

\begin{mysubsection}{Three zero objects}
\label{sthreeo}
In order to understand the complications that may arise in the general case, we first describe the example of three zero maps. Thus, consider the following poset $\sI$:
\myudiag[\label{eqcubeI}]{
 & \gamma \ar[rd] & \\
\alpha \ar[r]  \ar[dr] \ar[ur] & \delta\ar[r] &\beta~. \\
 & \epsilon \ar[ur] & 
}
\noindent Let \w{K\sb{\{\gamma,\delta,\epsilon\}}\subseteq H\sb{k}(\fX(\alpha))}
be the common kernel of the induced maps from \w{H\sb{k}(\fX(\alpha))} to \w[,]{H\sb{k}(\fX(\gamma))} \w[,]{H\sb{k}(\fX(\delta))}  and 
\w[.]{H\sb{k}(\fX(\epsilon))}
We can split it off the three kernels of the pairs of \S \ref{spairs} \wh e.g.
\w[.]{K\sb{\{\gamma,\delta,\epsilon\}}\oplus\widetilde{K}\sb{\{\gamma,\delta\}}
=K\sb{\{\gamma,\delta\}}}
This yields a description of the (homotopy) colimit of \w[,]{\fX'} 
the relevant subcomplexes of the restriction of $\fX$ to 
\w[:]{\aibb{\alpha}{\beta}}
\mytdiag[\label{eqcubeH}]{
\fX'(\alpha)=V:=K\sb{\{\gamma,\delta,\epsilon\}} \oplus\widetilde{K}\sb{\{\gamma,\delta\}}
\oplus\widetilde{K}\sb{\{\gamma,\epsilon\}}\oplus\widetilde{K}\sb{\{\delta,\epsilon\}}
\ar[rr]  \ar[dr] \ar[dd] && 
C(V\setminus\widetilde{K}\sb{\{\delta,\epsilon\}}) \ar[dr] \ar[dd] \\ 
& C(V\setminus\widehat{K}\sb{\{\gamma,\epsilon\}})\ar[dd] \ar[rr] && 
\Sigma K\sb{\{\gamma,\delta\}} \ar[dd] \\ 
C(V\setminus\widetilde{K}\sb{\{\gamma,\delta\}}) \ar[dr] \ar[rr] && 
\Sigma K\sb{\{\gamma,\epsilon\}}\ar[dr] \\ & 
\Sigma K\sb{\{\delta,\epsilon\}} \ar[rr] && \hocolim\sb{\aibb{\alpha}{\beta}} \fX'.
}

Note that in order to make this diagram cofibrant, we must add the relevant
complements of the kernels \wh for example, in the top square we should have
$$
\xymatrix@R=10pt@C=15pt{
V\ar[rr] \ar[d] && 
\widehat{K}\sb{\{\delta,\epsilon\}}\oplus C(V\setminus\widetilde{K}\sb{\{\delta,\epsilon\}})
\ar[d] \\ 
\widehat{K}\sb{\{\gamma,\epsilon\}}\oplus
C(V\setminus\widetilde{K}\sb{\{\gamma,\epsilon\}})\ \ar[rr] && 
C\widetilde{K}\sb{\{\gamma,\epsilon\}}\oplus C\widetilde{K}\sb{\{\delta,\epsilon\}}
\oplus\Sigma K\sb{\{\gamma,\delta\}}\simeq\Sigma K\sb{\{\gamma,\delta,\epsilon\}} \oplus
\Sigma \widetilde{K}\sb{\{\gamma,\delta\}}~. 
}
$$
\noindent We therefore have a non-canonical equivalence
\begin{myeq}\label{eqnoncan}
  \hocolim\sb{\aibb{\alpha}{\beta}}\fX'~\simeq~\Sigma K\sb{\{\gamma,\delta,\epsilon\}}
  \oplus\Sigma K\sb{\{\gamma,\delta,\epsilon\}}\oplus \Sigma\widetilde{K}\sb{\{\gamma,\delta\}}
 \oplus \Sigma\widetilde{K}\sb{\{\gamma,\epsilon\}}
 \oplus \Sigma\widetilde{K}\sb{\{\delta,\epsilon\}}~.
\end{myeq}
\noindent Thus the map 
 \w{\psi\sb{(\gamma,\delta,\epsilon;\beta)}} from 
 \w{H_{k+1}(\hocolim\sb{\aibb{\alpha}{\beta}}\fX')} 
 to \w{H\sb{k+1}\fX(\beta)} is induced by the \emph{compatible} maps
 \w[,]{\psi\sb{(\gamma,\delta;\beta)}}  \w[,]{\psi\sb{(\gamma,\epsilon;\beta)}}
 and  \w[.]{\psi\sb{(\delta,\epsilon;\beta)}}

 Note, however, that the splittings above involve choices; in our general description we shall avoid 
 making these choices at the cost of using a more complicated indexing category (see \S \ref{sgdd} below). 
\end{mysubsection}

\begin{mysubsection}{Describing the homotopy colimit}
\label{sdhc}
As noted above, the identification of the homotopy colimit in \wref{eqnoncan} is
non-canonical. In order to obtain a canonical description, note that if $\fX$ is (minimally) cofibrant (see
\S \ref{dmincof}), the homotopy colimit \w{A= \hocolim\sb{\aibb{\alpha}{\beta}}\fX}
is just the colimit. Assuming for simplicity that 
$$
V~:=~K\sb{\{\gamma,\delta,\epsilon\}}~=~K\sb{\{\gamma,\delta\}}~=~
K\sb{\{\gamma,\epsilon \}}~=~K\sb{\{\delta,\epsilon\}}~,
$$
as a chain complex, this colimit is given by
\begin{myeq}\label{eqhocopair}
A~=~(V\sb{\gamma}\oplus V\sb{\delta}\oplus V\sb{\epsilon)}~\xra{\nabla}~V~,
\end{myeq}
\noindent where all four vector spaces are copies of $V$ and $\nabla$ is the fold map (so
\w[,]{A\simeq\Sigma\sp{k+1}V\vee\Sigma\sp{k+1}V} as in \wref[).]{eqnoncan}

This fits into a diagram of chain complexes:
\myrdiag[\label{eqtrincexc}]{
&& \Sigma K\sb{\{\gamma,\delta\}} \ar[rrd] && \\
\Sigma K\sb{\{\gamma,\delta,\epsilon\}}\ar[rru]^{\iota\sb{(\gamma,\delta)}}
\ar[rr]^(0.7){\iota\sb{(\gamma,\epsilon)}} \ar[rrd]_{\iota\sb{(\delta,\epsilon)}} &&
\Sigma K\sb{\{\gamma,\epsilon\}} \ar[rr] && \hocolim\sb{\aibb{\alpha}{\beta}}\fX\\
&& \Sigma K\sb{\{\delta,\epsilon\}} \ar[rru] &&
}
\noindent which we shall indicate more succinctly by the linear diagram:
\begin{myeq}\label{eqincexc}
  \Sigma K\sb{\{\gamma,\delta,\epsilon\}}~\xra{\theta}~\Sigma K\sb{\{\gamma,\delta\}}
  \oplus\Sigma K\sb{\{\gamma,\epsilon\}}\oplus \Sigma K\sb{\{\delta,\epsilon\}}~\xra{\psi}~
  \hocolim\sb{\aibb{\alpha}{\beta}}\fX~
\end{myeq}
\noindent(using the fact that the biproduct $\oplus$ in \w{\Ch} is both a product and coproduct).

Note that \wref{eqincexc} becomes a short exact sequence upon applying \w{H\sb{k+1}} (and has trivial homology in other dimensions). 
Here \w{\Sigma K\sb{\{i,j\}}} is the chain complex
\begin{myeq}\label{eqpairs}
(V\sb{i}\oplus V\sb{j})~\xra{\nabla}~V
\end{myeq}
\noindent for each pair \w[,]{\{i,j\}\subseteq\{\gamma,\delta,\epsilon\}}
with each \w[,]{\Sigma K\sb{\{i,j\}}\simeq\Sigma\sp{k+1}V} 
and \w{\Sigma K\sb{\{\gamma,\delta,\epsilon\}}} is the chain complex kernel of $\psi$, viz.,
$$
(V'\sb{\gamma}\oplus V'\sb{\delta}\oplus V'\sb{\epsilon})~\xra{\partial}~
(V'\sb{\gamma,\delta}\oplus V'\sb{\delta,\epsilon})
$$
\noindent (all copies of $V$), with $\partial$ mapping the first summand to the first,
the last to the last, and the middle summand by the diagonal. Thus 
\w[.]{\Sigma K\sb{\{\gamma,\delta,\epsilon\}}\simeq\Sigma\sp{k+1}V} The map $\psi$
takes \w{V'\sb{i}} to the two copies of \w{V\sb{i}} in \wref{eqpairs} by
the signed diagonal map \w[.]{(1,-1)} 
\end{mysubsection}

\begin{mysubsection}{Multiple pairs}
\label{smulpr}
More generally, if we have $m$ incomparable objects \w{\{\gamma\sb{i}\}\sb{i=1}\sp{m}} in $\sI$ (see \S \ref{spairs})
with a common join $\beta$, let $K$ denote the common kernel of the maps
\w[.]{H\sb{k}\fX(\alpha)\to H\sb{k}\fX(\gamma\sb{i})} 
Assuming $\fX$ is as in \S \ref{dmincof}, \w{\fX(\gamma\sb{i})} is the cone
\w[,]{(K\sb{i}\to K)} so that
\w{\hocolim\sb{\aibb{\alpha}{\beta}}\fX} is the chain complex
 \w[,]{\bigoplus_{i=1}^{n}\ K\sb{i} \to K} non-canonically weakly equivalent to
 \w[.]{\bigvee_{i=1}^{n-1}\ \Sigma K}

The canonical description is again given by an inclusion-exclusion sequence of
chain complexes as in \wref[:]{eqincexc}
\begin{myeq}\label{eqmultiple}
\begin{split}
\Sigma K\sb{\{\gamma\sb{1},\dotsc,\gamma\sb{m}\}}~&
\to~\bigoplus\sb{1\leq i\leq m}~
 \Sigma K\sb{\{\gamma\sb{1},\dotsc,\widehat{\gamma\sb{i}},\dotsc,\gamma\sb{m}\}}~
 \to~\dotsc\to\\
 \bigoplus\sb{1\leq i<j<k\leq m}\, &
 \Sigma K\sb{\{\gamma\sb{i},\gamma\sb{j},\gamma\sb{k}\}} \ \to \  
 \bigoplus\sb{1\leq i<j\leq m}\ \Sigma K\sb{\{\gamma\sb{i},\gamma\sb{j}\}}
\xra{\psi}~
 \hocolim\sb{\aibb{\alpha}{\beta}}\fX~.
\end{split}
\end{myeq}
\noindent which becomes an exact sequence of vector spaces after applying
\w{H\sb{k+1}} (this follows by induction from the fact that the alternating sum
of the binomial coefficients is zero).
\end{mysubsection}

%
%
\sect{Derived diagrams}
\label{cdd}

We now describe our main tool in analyzing the higher homotopy information of a diagram \w[,]{\fX:\sI\to\Ch} where $\sI$ is a partially ordered set.  
As explained in \S \ref{soas}, our approach is inductive, producing two sequences of
approximations. Thus for each \w{k\geq 0} we have:

\begin{enumerate}
\renewcommand{\labelenumi}{(\alph{enumi})~}
\item The $k$-\emph{hybridization} \w{\Hyb{k}\fX:\sIu{k}\to\Chp} which retains all 
information about the weak homotopy type of $\fX$, in a form which is partly formal
(that is, a diagram of graded vector spaces), at the expense of making the indexing category \w{\sIu{k}} more complicated. 
\item The $k$-th \emph{derived diagram} \w[,]{\Der{k}\fX:\hIDu{k}\to\grV} which is
purely formal, and encodes all homotopy information about $\fX$ through degree $k$.
\end{enumerate}

Here the sequence \w{(\Der{k}\fX)\sb{k=0}\sp{\infty}} is our ultimate product, expressing the weak homotopy type of $\fX$ in terms of graded vector spaces. The sequence \w{(\Hyb{k}\fX)\sb{k=0}\sp{\infty}} is an auxiliary
construction, which allows us to pass from \w{\Der{k}\fX} to \w[.]{\Der{k+1}\fX} 

This section is devoted to a single step in this inductive process, starting from 
\w{\Hyb{k}\fX} and passing to \w[.]{\Hyb{k+1}\fX} Thus we assume given a $k$-formal diagram \w{\fY:\sJ\to\Chp} (of the form \w[,]{\fY=\Hyb{k}\fX} though this is not
used anywhere). Because the model category of (unbounded) chain complexes  is stable,
the value of $k$ does not matter, so for simplicity we may take \w[.]{k=0} The indexing category $\sJ$, which we think of as \w[,]{\sIu{k}} is also partially ordered (although it is more complicated than the original $\sI$).

\begin{defn}\label{dmaxcoll}
Assume given \w[,]{k\geq 0} two objects \w{\alpha<\beta} in a finite partially
ordered set $\sJ$, and a $k$-formal diagram \w[.]{\fY:\sJ\to\Ch} For each collection
\w{\Gamma=\{\gamma\sb{i}\}\sb{i=1}\sp{m}} of incomparable objects in 
\w{\ajb{\alpha}{\beta}} 
\wb[,]{m\geq 2}let \w{K\sp{\alpha}\sb{\gamma\sb{i}}:=\Ker(H\sb{k}\fX(\alpha)\to H\sb{k}\fX(\gamma\sb{i}))} and 
more generally \w{K\sp{\alpha}\sb{\Gamma'}:=\bigcap\sb{\gamma\in\Gamma'}\,K\sp{\alpha}\sb{\gamma}} for any subset \w[.]{\Gamma'\subseteq\Gamma}

The \emph{partial derived diagram \w{\fY'(\alpha,\Gamma,\beta)} of  $\fY$ at \w{(\alpha,\Gamma,\beta)}} is defined to be the following diagram of graded vector spaces: 
\begin{myeq}\label{eqdergamma}
K\sp{\alpha}\sb{\Gamma} \to~\bigoplus\sb{1\leq i\leq m}~
K\sp{\alpha}\sb{\Gamma\setminus\{\gamma\sb{i}\}}\to~\dotsc\to \bigoplus\sb{1\leq i<j\leq m}\
K\sp{\alpha}\sb{\{\gamma\sb{i},\gamma\sb{j}\}}~\xra{\Psi:=\bot\sb{i<j}\psi\sb{i,j}}~
H\sb{k+1}\fY(\beta)
\end{myeq}
\noindent (compare \wref[),]{eqmultiple} where the first term is in degree $k$, and the rest in degree \w[,]{k+1} so all but the first map are of degree $0$. 

As in \wref[,]{eqincexc} we use the fact that $\oplus$ is a biproduct in \w{\grV} to condense a commutative diagram in which each summand in \wref{eqdergamma} appears separately, as do the individual components
of the maps between summands: see \wref{eqdergammak} below.

The structure maps
\w[,]{\hdt{i,j}:K\sp{\alpha}\sb{\Gamma}\to K\sp{\alpha}\sb{\{\gamma\sb{i},\gamma\sb{j}\}}}
all monomorphisms of degree \w[,]{+1} are called 
\emph{formal differentials} of $\fY$.
Each map \w{\psi\sb{i,j}:K\sp{\alpha}\sb{\{\gamma\sb{i},\gamma\sb{j}\}}\to H\sb{k+1}\fY(\beta)} is called the \emph{evaluation map} at
\w[,]{(\gamma\sb{i},\gamma\sb{j})} with
\w{\dt{i,j}:=\psi\sb{i,j}\circ \hdt{i,j}} the \emph{value} of the differential at \w{(\gamma\sb{i},\gamma\sb{j})} (also of degree \w[).]{+1}

The fact that $\Psi$ fits into the commutative
diagram \wref{eqdergamma} implies that there is coordination between the different values
(since the homotopy colimit of the slice diagram over \w{H\sb{k+1}\fX(\beta)}
is non-canonically equivalent to \w{m-1} distinct copies of \w[,]{K\sp{\alpha}\sb{\Gamma}} by \S \ref{smulpr}).
\end{defn}

\begin{mysubsection}{Global derived diagrams}
\label{sgdd}
Given a partially ordered set $\sJ$, let \w{\Path\sb{\sJ}} denote the category whose objects are
factorizations of \w{f:\alpha\to\beta} via $m$ incomparable intermediate objects
\w{\Gamma=\{\gamma\sb{i}\}\sb{i=1}\sp{m}} \wb[.]{m\geq 2} Its objects are \w{(\alpha,\Gamma,\beta)} as in \S \ref{dmaxcoll}, and we have a unique morphism
\w{(\alpha,\Gamma',\beta')\to(\alpha,\Gamma,\beta)} in \w{\Path\sb{\sJ}} (for \w[)]{\Gamma'=\{\gamma\sb{j}'\}\sb{j=1}\sp{n}} whenever \w{\beta'\leq\beta} and \w{\Gamma'\leq\Gamma} (that is, for each \w{1\leq j\leq n} there is an \w{1\leq i\leq m} with \w{\gamma'\sb{j}\leq\gamma\sb{i}} in $\sJ$ \wh so in particular, we could have 
\w[).]{\Gamma'\subseteq\Gamma} 

An example of a morphism (from top to bottom):
\mysdiag[\label{eqmorpath}]{
&\gamma'\sb{j\sb{1}}\ar|(.52){\hole}[dd]\ar[rrrd]&&& \\ 
\alpha\ar[dd]_{=} \ar[ru] \ar[rr] &&\gamma'\sb{j\sb{2}}\ar[dd]\ar[rr]&& \beta'\ar[dd]\\
&\gamma\sb{i\sb{1}}\ar|(.37){\hole}[rrrd]&&& \\ 
\alpha\ar[rd]\ar[ru] \ar[rr] &&\gamma\sb{i\sb{2}}\ar[rr]&& \beta\\
&\gamma\sb{i\sb{3}}\ar[rrru]&&& 
}

We denote by \w{\wPath\sb{\sJ}} the analogous category of systems \w{(\alpha,\Gamma)} (omitting the common join $\beta$ for $\Gamma$, which is still required to exist). There is a forgetful functor 
\w[.]{U:\Path\sb{\sJ}\to\wPath\sb{\sJ}}

For every morphism \w{\iota\sp{\Gamma'}\sb{\Gamma}:(\alpha,\Gamma',\beta')\to(\alpha,\Gamma,\beta)} 
in \w{\Path\sb{\sJ}} we have a commuting diagram
\mydiagram[\label{eqsquare}]{
K\sp{\alpha}\sb{\Gamma'}\ar[rrd]_{\hdt{i\sb{1},i\sb{2}}}
\ar[rr]^{\hdt{j\sb{1},j\sb{2}}} && K\sp{\alpha}\sb{\{\gamma'\sb{j\sb{1}},\gamma'\sb{j\sb{2}}\}} \ar[d]\sb{\varphi} \ar[rr]\sp{\psi'\sb{j\sb{1},j\sb{2}}}&& 
H\sb{k+1}\fY(\beta')\ar[d]\sp{H\sb{k+1}\fY}\\
&& K\sp{\alpha}\sb{\{\gamma\sb{i\sb{1}},\gamma\sb{i\sb{2}}\}} \ar[rr]\sb{\psi\sb{i\sb{1},i\sb{2}}}
&& H\sb{k+1}\fY(\beta)
}
\noindent relating the values of the differentials. We also have a functor
\w{\eK:\wPath\sb{\sJ}\to\Vect} sending a morphism
\w{(\alpha,\Gamma')\to(\alpha,\Gamma)} in \w{\wPath\sb{\sJ}} to the left triangle in \wref[.]{eqsquare}
\end{mysubsection}

\begin{defn}\label{ddercat}
The \emph{derived category} \w{\hJD} of $\sJ$ is defined to be the union of the following subcategories, related by the 
specified morphisms (with the obvious compositions):

\begin{enumerate}
\renewcommand{\labelenumi}{(\alph{enumi})~}
\item The given category $\sJ$;
\item For each \w{(\alpha,\Gamma,\beta)} in \w[,]{\Path\sb{\sJ}} the indexing category
\begin{myeq}\label{eqdergammak}
\hK\sp{\alpha}\sb{\Gamma} \to~\coprod\sb{1\leq i\leq m}~
\hK\sp{\alpha}\sb{\Gamma\setminus\{\gamma\sb{i}\}}\to~\dotsc\to \coprod\sb{1\leq i<j\leq m}\
\hK\sp{\alpha}\sb{\{\gamma\sb{i},\gamma\sb{j}\}}~\xra{\hPsi:=\bot\sb{i<j}\hpsi\sb{i,j}}~
\beta
\end{myeq}
\noindent (see explanation immediately following \wref[),]{eqdergamma}  with $\beta$ of \wref{eqdergammak} identified with corresponding $\beta$ of $\sJ$.
\item The obvious structure maps between the objects described so far corresponding to the morphisms of \w[.]{\Path\sb{\sJ}} 
\item A new structure map \w{j\sb{\alpha}:\hK\sp{\alpha}\sb{\Gamma}\to\alpha} from the initial object in \wref{eqdergammak} to the corresponding object in $\sJ$.
\item Finally, a partial $\Proj$-completion by adding  formal pullbacks of all pairs
\w{(j\sb{\alpha},\theta)} with the same target, where \w{\theta:\alpha'\to\alpha} is any (non-identity) map of $\sJ$. 
\end{enumerate}
\end{defn}

\begin{defn}\label{dderdiag}
Given a $k$-hybrid diagram \w{\fY:\sJ\to\Chp} (see \S \ref{dformal}), its 
\emph{global derived diagram} \w{\fY':\hJD\to\grV} is defined by

\begin{enumerate}
\renewcommand{\labelenumi}{(\alph{enumi})~}
\item Sending the copy of $\sJ$ in \w{\hJD} to \w{\grV} by \w{H\sb{\ast\leq k+1}\fY} (possibly including maps of positive degree).
\item Sending the indexing diagram \wref{eqdergammak} for \w{(\alpha,\Gamma,\beta)} to the partial derived diagram \w{\fY'(\alpha,\beta,\Gamma)} of Definition \ref{dmaxcoll}.
\item Sending the structure maps for the morphisms of \w{\Path\sb{\sJ}} to the central vertical map in \wref[.]{eqsquare}
\item Sending  \w{j\sb{\alpha}:\hK\sp{\alpha}\sb{\Gamma}\to\alpha} to the inclusion  \w[.]{K\sp{\alpha}\sb{\Gamma}\hra H\sb{k}\fY(\alpha)} 
\item Extending in a limit-preserving manner to the partial $\Proj$-completion.
\end{enumerate}
\end{defn}

\begin{lemma}\label{lderform}
The global derived diagram \w{\fY':\hJD\to\grV} of a $k$-hybrid diagram $\fY$ is \wwb{k+1}formal.
\end{lemma}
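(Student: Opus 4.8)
The plan is to verify the two defining clauses of $(k+1)$-formality (Definition~\ref{dformal}) directly, using that $\fY'$ takes values in $\grV\subseteq\Chp$, so that every $\fY'(\sigma)$ is a graded vector space with vanishing differential. The clause $\fY'(\sigma)\sb{i}\cong H\sb{i}\fY'(\sigma)$ for all $i<k+1$ is then immediate, since $H\sb{i}\fY'(\sigma)=\fY'(\sigma)\sb{i}$ in every dimension. Hence the real content is the second clause: that the connected cover $\fY'\lra{k+1}$ lands in the ordinary category $\Ch$, that is, that every structure map of $\fY'$ restricts to a degree-$0$ chain map on dimensions $\geq k+1$.

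First I would record that $\fY'$ is concentrated in dimensions $\leq k+1$, checking this on each family of objects of $\hJD$ from Definition~\ref{ddercat}: on the copy of $\sJ$ the value is $H\sb{\ast\leq k+1}\fY$, graded in dimensions $0,\dots,k+1$; on the indexing diagram \eqref{eqdergammak} the values are $K\sp{\alpha}\sb{\Gamma}$ in dimension $k$ together with the remaining kernels and $H\sb{k+1}\fY(\beta)$ in dimension $k+1$, as specified after \eqref{eqdergamma}; and on the partial $\Proj$-completion the values are pullbacks of the preceding ones, computed dimensionwise in $\grV$.

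Next I would sort the structure maps by degree. The only positive-degree maps of $\fY'$ are those inherited from $\fY$ on the copy of $\sJ$ through $H\sb{\ast\leq k+1}\fY$, and the formal differentials $\hdt{i,j}:K\sp{\alpha}\sb{\Gamma}\to K\sp{\alpha}\sb{\{\gamma\sb{i},\gamma\sb{j}\}}$ issuing from $K\sp{\alpha}\sb{\Gamma}$ in \eqref{eqdergamma}. Every remaining map has degree $0$: the maps among the dimension-$(k+1)$ summands of \eqref{eqdergamma} and the central maps $\varphi$ of \eqref{eqsquare} act within dimension $k+1$, the inclusion $j\sb{\alpha}:K\sp{\alpha}\sb{\Gamma}\hra H\sb{k}\fY(\alpha)$ acts within dimension $k$, and the pullback projections are dimensionwise. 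For the inherited maps I would invoke that $\fY$ is $k$-hybrid, hence $k$-formal, so that $\fY\lra{k}$ already lies in $\Ch$; thus the maps induced on $H\sb{k}$ and $H\sb{k+1}$ have degree $0$ and all positive-degree behaviour is confined to dimensions $<k$.

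Finally I would pass to the connected cover $\lra{k+1}$, which discards every dimension $<k+1$. This kills the source $K\sp{\alpha}\sb{\Gamma}$ (in dimension $k$) of every formal differential and of every $j\sb{\alpha}$, so those maps contribute nothing, while the inherited positive-degree maps, living below dimension $k$, are discarded as well. What survives on dimensions $\geq k+1$ are exactly the degree-$0$ maps above, and any degree-$0$ map of graded vector spaces with vanishing differential is automatically a chain map; hence $\fY'\lra{k+1}$ lands in $\Ch$, which is the second clause, and together with the first this gives $(k+1)$-formality. I expect the one real bookkeeping obstacle to be the $\Proj$-completion step: one must confirm that forming dimensionwise pullbacks of the pairs $(j\sb{\alpha},\theta)$ creates no new positive-degree maps in dimension $k+1$ and preserves concentration in dimensions $\leq k+1$. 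I would settle this by recalling that limits in $\grV$ are computed dimensionwise and that the leg $j\sb{\alpha}$ vanishes in dimension $k+1$, so that there the pullback reduces to a kernel of $H\sb{k+1}\fY(\theta)$ with degree-$0$ projections.
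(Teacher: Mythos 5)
The paper offers no proof of Lemma \ref{lderform}: it is stated bare, as an immediate consequence of Definitions \ref{dmaxcoll}--\ref{dderdiag}, so there is no argument of the paper's to compare yours against. Your direct verification is correct and supplies exactly the bookkeeping the paper leaves implicit. The first clause is indeed vacuous because $\fY'$ takes values in $\grV$, where all differentials vanish; the inventory of structure maps is complete (the copy of $\sJ$ carried by $H\sb{\ast\leq k+1}\fY$, the partial derived diagrams \wref{eqdergammak}, the central maps of \wref{eqsquare}, the inclusions $j\sb{\alpha}$, and the $\Proj$-completion); and the decisive observation --- that every positive-degree map either is confined to dimensions $<k$ by the $k$-formality of $\fY$ or, like $\hdt{i,j}$ and $j\sb{\alpha}$, has its source in dimension $k$ and hence is annihilated by $\lra{k+1}$ --- is the right one. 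Your handling of the $\Proj$-completion, with the dimension-$(k+1)$ pullback of $(j\sb{\alpha},\theta)$ collapsing to $\Ker\bigl(H\sb{k+1}\fY(\theta)\bigr)$ with degree-$0$ projections, is also correct.

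One caveat is worth recording explicitly. Read literally, Definition \ref{dformal} permits positive-degree maps only in $\tau\sb{k+1}\fY'$ \emph{and only when they land in dimensions} $<k+1$; the formal differentials $\hdt{i,j}:K\sp{\alpha}\sb{\Gamma}\to K\sp{\alpha}\sb{\{\gamma\sb{i},\gamma\sb{j}\}}$ are degree-$(+1)$ maps from dimension $k$ \emph{into} dimension $k+1$, so they violate that parenthetical. This is a defect of the paper's phrasing of $k$-formality rather than of your argument --- the entire construction, and Definition \ref{dhybrdztn}'s description of $\fY'$ as $(k+1)$-hybrid, presuppose that such maps are admissible --- but your proof tacitly adopts the weaker, intended reading (objects isomorphic to their homology below dimension $k+1$, and $\fY'\lra{k+1}$ landing in $\Ch$), and it would strengthen the write-up to say so rather than to claim that the two clauses you check exhaust the definition.
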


\begin{defn}\label{dedd}
Given the global derived diagram \w{\fY':\hJD\to\grV} of a $k$-hybrid diagram 
\w[,]{\fY:\sJ\to\Chp} we now construct its \emph{expanded derived diagram} 
\w[,]{\wY=\Exp\fY':\hJIn{2}\to\Chp} starting with \w{\tau\sb{k-1}\wY} equal to \w{\tau\sb{k-1}\wYn{2}} (which is formal); disregarding both, we assume for simplicity that \w[.]{k=0} We shall require $\wY$ to be minimally cofibrant, and present it as a filtered diagram of chain complexes \w[,]{\hF{0}\wY\subseteq \hF{1}\wY\subseteq \hF{2}\wY\dotsc} with
\w[.]{\wY=\bigcup\sb{\ell=0}\sp{N}~\hF{\ell}\wY} The successive subdiagrams are defined inductively\vsm.

\noindent\textbf{I.}\  Let \w{\{\alpha\sp{i}\}\sb{i=1}\sp{n\sb{\init}}=\cF{0}\sJ} (see \S \ref{snac}) and 
\w{\{\omega\sp{j}\}\sb{j=1}\sp{n\sb{\term}}} be the sets of initial and terminal objects of $\sJ$, respectively.

We start with \w{\hF{0}\wY:=H\sb{0}(\fY)\oplus H\sb{1}(\fY)} 
(which is the value of \w{\fY'} on the copy of $\sJ$ inside \w[,]{\hJD} 
by Definitions \ref{ddercat}(a) and \ref{dderdiag}(a)), \ww{\Ind}-completed as in \S \ref{sindcomp}.
In addition, we choose a splitting of \w{H\sb{0}\fY(\alpha\sp{i})} \wb{i=1,\dotsc,n\sb{\init}}  as follows:
\begin{myeq}\label{eqsplitone}
H\sb{0}\fY(\alpha\sp{i})~=~\tilde{H}(\alpha\sp{i})~\oplus~\bigoplus\sb{(\alpha\sp{i},\Gamma)\in\wPath\sb{\sJ}}
\widetilde{K}\sp{\alpha\sp{i}}\sb{\Gamma}
\end{myeq}
\noindent
(see \S \ref{sgdd}), where
\begin{myeq}\label{eqsplittwo}
\widetilde{K}\sp{\alpha\sp{i}}\sb{\Gamma}~:=~K\sp{\alpha\sp{i}}\sb{\Gamma}~\setminus 
\bigcup\sb{\alpha\sp{i}<\delta<\gamma\in\Gamma}~K\sp{\alpha\sp{i}}\sb{(\Gamma\setminus\{\gamma\})\cup\{\delta\}}~
\end{myeq}
\noindent consists of those elements of \w{K\sp{\alpha\sp{i}}\sb{\Gamma}} which vanish for the first time in $\Gamma$, and \w{\tilde{H}(\alpha\sp{i})} consists of those elements of \w{H\sb{0}\fY(\alpha\sp{i})} which survive to all of the homology groups  
\w[.]{H\sb{0}\fY(\omega\sp{j})} The splitting implicit in the notation for \wref{eqsplittwo} is defined by induction on the
filtration of the elements of $\Gamma$\vsm.

\noindent\textbf{II.}\ For the next stages, note that for any \w[,]{\delta\in\sJ\setminus\cF{0}\sJ} we have a sub-vector space \w{H'(\delta)\subseteq H\sb{0}\fY(\delta)} spanned by the images of 
\w{H\sb{0}\fY(\alpha\sp{i})\to H\sb{0}\fY(\delta)} for all 
\w[,]{\alpha\sp{i}\in\cF{0}\sJ} and we choose a splitting for this inclusion (by induction on the filtration of $\delta$, compatibly with \wref[).]{eqsplitone}

Let \w{\tJ:=\sJ\setminus\cF{0}\sJ} with corresponding derived category \w{\tJD} (inside
\w[),]{\hJD} and define a diagram \w{\ttY:\tJD\to\Chp} by setting \w{\ttY(\delta)\sb{0}} to be the chosen complement in \w{\fY'(\delta)\sb{0}} of \w[,]{H'(\delta)} and \w[.]{\ttY(\delta)\sb{1}:=\fY'(\delta)\sb{1}} Note that \w{\cF{s}\tJ=\cF{s+1}\sJ} for all \w[,]{s\geq 0} and by induction we know how to define \w[.]{\hF{\ell-1}\Exp\ttY}

At stage \w[,]{\ell=1} for each \w{\alpha\sp{i}\in\cF{0}} we set \w[,]{\hF{1}\wY(\alpha\sp{i}):=\hF{0}\wY(\alpha\sp{i})} and for each \w{\gamma\in\cF{1}\sJ\setminus\cF{0}\sJ} we set
\begin{myeq}\label{eqfone}
\hF{1}\wY(\gamma):=H'(\gamma)~\oplus~\hF{0}\Exp\ttY(\gamma)~\oplus~\bigoplus\sb{i=1}\sp{n\sb{\init}}\ \bigcup\sb{\omega\sp{j}>\gamma}~
\bigcup\sb{\{\gamma,\delta\} \in\Incomp(\ajb{\alpha\sp{i}}{\omega\sp{j}})}~
CK\sp{\alpha\sp{i}}\sb{\{\gamma,\delta\}}
\end{myeq}
\noindent Observe that \wref{eqfone} is not a direct sum of diagrams: under the morphism \w{\alpha\sp{i}<\gamma} of
$\sJ$, we send those summands of \w{H\sb{0}\fY(\alpha\sp{i})\subseteq \hF{0}\wY(\alpha\sp{i})}
in \wref{eqsplitone} which together constitute  
\w{K\sp{\alpha\sp{i}}\sb{\{\gamma,\delta\}}} to \w{CK\sp{\alpha\sp{i}}\sb{\{\gamma,\delta\}}} in 
\w[.]{\hF{1}\wY(\gamma)} 

Both the coproduct and the unions in \wref{eqfone} are just the appropriate colimits. However, in order to ensure cofibrancy, we must have a coproduct over the various initial objects \w[,]{\alpha\sp{i}} but we have natural inclusions of the cones for \w[,]{\delta<\delta'} say, or the same minimal common join \w{\beta\leq\omega\sp{j},\omega\sp{j'}} 
(the terminal objects only appear to ensure that $\gamma$ and $\delta$ have \emph{some} common join).

For \w{\beta\in\sJ} and every incomparable pair 
\w[,]{\{\gamma,\delta\}\in\Incomp(\ajb{\alpha\sp{i}}{\beta})} there is some 
\w{1\leq j\leq n\sb{\term}} with \w[,]{\beta\leq\omega\sp{j}} and thus inside both \w{\wY(\gamma)} and \w{\wY(\delta)} we have copies of the cone 
\w[,]{CK\sp{\alpha\sp{i}}\sb{\{\gamma,\delta\}}} with inclusions of 
\w{K\sp{\alpha\sp{i}}\sb{\{\gamma,\delta\}}\subseteq H\sb{0}\fY(\alpha\sp{i})}
into each. Therefore, in \w{\hJIn{2}} we have a copy of the pushout 
\w[,]{P\simeq\Sigma K\sp{\alpha\sp{i}}\sb{\{\gamma,\delta\}}} and from Definition
\ref{dderdiag}(b) the derived diagram \w{\fY'} determines the evaluation map 
\w[.]{\psi:P\to H\sb{1}\fY(\beta)\subseteq\wY(\beta)} We now extend the diagram we have obtained to all of \w{\hJIn{2}} by identity maps (so \w{\hF{1}\wY} is cofibrant)\vsm.

\noindent\textbf{III.}\ In the induction stage, assuming we have defined \w{\hF{s}\wY} for all
\w[,]{0\leq s<\ell} we require the following notation:

 For any \w{\gamma,\gamma'\in\sJ} with \w[,]{\alpha\sp{i}<\gamma'<\gamma} we have 
\w[,]{K\sp{\alpha\sp{i}}\sb{\gamma'}\subseteq K\sp{\alpha\sp{i}}\sb{\gamma}}
and we let \w{K'\sb{\gamma}} denote the colimit 
\w{\bigcup\sb{\alpha\sp{i}<\gamma'<\gamma}\ K\sp{\alpha\sp{i}}\sb{\gamma'}} (a partial direct summand of
 \w[,]{K\sp{\alpha\sp{i}}\sb{\gamma}} using \wref[),]{eqsplitone} with 
\begin{myeq}\label{eqkpairs}
\tilde{K}\sp{\alpha\sp{i}}\sb{(\gamma,\delta)}~:=~
(K\sp{\alpha\sp{i}}\sb{\gamma}\setminus K'\sb{\gamma})\cap K\sp{\alpha\sp{i}}\sb{\delta}~.
%
\end{myeq}
\noindent (the notation \w{K\sp{\alpha\sp{i}}\sb{\gamma}\setminus K'\sb{\gamma}}
depends on the splitting in Step I).

For any \w{\gamma\in\cF{\ell}\sJ\setminus\cF{\ell-1}\sJ} we now define
\begin{myeq}\label{eqftwo}
\hF{\ell}\wY(\gamma)~:=~H'(\gamma)\oplus\hF{\ell-1}\Exp\ttY(\gamma)~ \oplus~\bigoplus\sb{i=1}\sp{n\sb{\init}}\ \bigcup\sb{\omega\sp{j}>\gamma}~
\bigcup\sb{(\gamma,\delta)\in\Incomp(\ajb{\alpha\sp{i}}{\omega\sp{j}})}~
C\tilde{K}\sp{\alpha\sp{i}}\sb{(\gamma,\delta)}~.
\end{myeq}

Again we have evaluation maps from the new parts of each pushout into any common join, dictated by the values in 
\w[,]{\fY'} and again we extend the diagram to all of \w{\hJIn{2}} by identity maps.
\end{defn}

\begin{prop}\label{pexptrun}
Given a $k$-hybrid diagram \w[,]{\fY:\sJ\to\Chp} there is a map \w{\Omega:\wY=\Exp\fY'\to\wYn{2}} of \ww{\hJIn{2}}-indexed diagrams in \w{\Chp} which is  a\wwb{k+1}equivalence, with weak inverse \w[.]{\Phi:\tau\sb{k+1}\wYn{2}\to\wY} 
\end{prop}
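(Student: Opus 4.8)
The plan is to read off $\Omega$ directly from the filtered construction of $\wY=\Exp\fY'$ in Definition \ref{dedd}, and to reduce the \wwb{k+1}equivalence statement to the inclusion--exclusion computations of Section \ref{cssd}. Since $\Ch$ is stable we may assume $k=0$, as in Definition \ref{dedd}. I would construct $\Omega\colon\wY\to\wYn{2}$ by induction on the filtration $\hF{\ell}\wY$, defining it on each piece of \wref{eqftwo} (and \wref{eqfone} at the initial stage): the formal summand $H'(\gamma)\oplus\hF{\ell-1}\Exp\ttY(\gamma)$ is sent by the identity to the corresponding subspace of $\fY(\gamma)=\wYn{2}(\gamma)$, while each explicit cone $C\tilde{K}\sp{\alpha\sp{i}}\sb{(\gamma,\delta)}$ built into $\wY$ is matched with the summand of the minimally cofibrant $\fY\lra{k}(\gamma)$ of the form $CK(V,m)$ provided by Definition \ref{dmincof}(b) --- the condition there guarantees that exactly such a cone arises, and from exactly the structure map realizing the inclusion $K\sb{\{\gamma,\delta\}}\hra CK\sb{\{\gamma,\delta\}}$. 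On a formal pushout object $\check\beta\in\hJIn{2}\setminus\sJ$, $\Omega(\check\beta)$ is the canonical comparison from the explicit pushout $P\simeq\Sigma K\sp{\alpha\sp{i}}\sb{\{\gamma,\delta\}}$ of $\wY$ into the genuine colimit $\wYn{2}(\check\beta)=\hocolim\sb{\ajbb{\alpha}{\beta}}\fY$.

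The substantive point is naturality: one must verify that these objectwise maps assemble into a map of $\hJIn{2}$-indexed diagrams, i.e.\ that they commute with the structure maps of $\sJ$, with the cone inclusions, with the maps coming from $\Path\sb{\sJ}$ recorded in the square \wref[,]{eqsquare} and with those added at the partial $\Proj$-completion. This is precisely where the coordination encoded in $\fY'$ is used: by Definition \ref{dderdiag}(b) the evaluation maps $\psi$ were defined as the maps induced into $H\sb{k+1}\fY(\beta)$ by the relevant homotopy colimit, so $\Omega$ intertwines the two systems of evaluation maps by construction, and the commuting square \wref{eqsquare} guarantees compatibility along morphisms of $\Path\sb{\sJ}$.

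Granting naturality, the \wwb{k+1}equivalence is a homology computation. In degrees $\leq k$ both $\wY$ and $\wYn{2}$ are formal and equal to $\Hs\fY$ there, so $\Omega$ is an isomorphism. In degree $k+1$, at each pushout object the homology of the explicit complex $P=\wY(\check\beta)$ and of $\hocolim\sb{\ajbb{\alpha}{\beta}}\fY=\wYn{2}(\check\beta)$ are both computed by the sequence \wref[,]{eqmultiple} which becomes exact after applying $H\sb{k+1}$; since $\Omega$ respects this inclusion--exclusion sequence term by term, $H\sb{k+1}\Omega$ is an isomorphism. Hence $\tau\sb{k+1}\Omega$ is a weak equivalence.

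Finally, for the weak inverse $\Phi\colon\tau\sb{k+1}\wYn{2}\to\wY$: working over a field, the minimally cofibrant $\wY$ is degreewise free with generators that, in degrees $\leq k+1$, split into cycle generators (the $K(V,m)$-summands) representing a basis of $\Hs\wY$ together with acyclic cone cells. I would define $\Phi$ on $\tau\sb{k+1}\wYn{2}$ by sending each homology generator to the $\wY$-cycle representing its image under the isomorphism $H\sb{\ast\leq k+1}\Omega$, extending linearly and then to a chain map; one then checks that $\Omega\Phi$ and $\Phi\,\tau\sb{k+1}\Omega$ are the respective identities up to homotopy, again using formality over $\bF$. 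The hardest part will be carrying out both constructions \emph{coherently over the whole diagram}: verifying naturality of $\Omega$ uniformly across the $\Proj$-completion and the $\Path\sb{\sJ}$-morphisms, and --- more delicately --- choosing the cycle representatives defining $\Phi$ compatibly with the filtration and all structure maps, so that $\Phi$ is an honest map of $\hJIn{2}$-diagrams rather than merely an objectwise equivalence.
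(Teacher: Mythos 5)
There is a genuine gap, concentrated exactly where you locate ``the hardest part.'' Your construction of $\Omega$ rests on the claim that Definition \ref{dmincof}(b) matches each cone $C\tilde{K}^{\alpha^i}_{(\gamma,\delta)}$ of $\wY$ with a cone summand of $\fY(\gamma)$ arising ``from exactly the structure map realizing the inclusion $K_{\{\gamma,\delta\}}\hra CK_{\{\gamma,\delta\}}$.'' Minimal cofibrancy does not provide this: it only says that each cone summand of $\fY(\gamma)$ receives \emph{some} $K(V,m)$ from \emph{some} predecessor; it does not organize the kernels $K^{\alpha^i}_{\gamma}$ into pieces indexed by incomparable pairs, nor does it separate out the elements that vanish along a unique path (the subspace $\oK^{\alpha}$ of the paper's Step II), which are coned off in $\fY$ but have \emph{no} counterpart in $\wY$ and must be discarded. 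Producing that organization \wh a system of splittings of each $\fY(\gamma)_{0}$ into $H'(\gamma)$, a ``unique-path'' part $D$, and an ``incomparable-pair'' part $E$, further graded by the filtration of first vanishing, chosen compatibly along all chains to the terminal objects and compatibly with the image of $d_{1}$ \wh is not a routine verification to be postponed; it is the entire content of the paper's proof (Steps II--VI). The same missing data undermines your $\Phi$: ``sending each homology generator to the $\wY$-cycle representing its image'' is not well defined diagram-wise without exactly these coherent splittings (for instance, classes in $H_{0}\fY(\gamma)$ coming from distinct initial objects may coincide, and a representative chosen at $\gamma$ must be propagated along every chain above $\gamma$).

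Your architecture is also inverted relative to the paper's, which is worth noting. The paper builds $\Phi:\tau_{k+1}\wYn{2}\to\wY$ \emph{first}, by peeling $\fY$ apart with the splittings above (sending the $D$-summands to zero and the $E$-summands isomorphically onto the cones of $\wY$), and then obtains $\Omega$ as a section of $\Phi$, extended to all of $\wYn{2}$ using that $\wY$ vanishes above dimension $k+1$; the $(k+1)$-equivalence is then soft, since $\Omega$ misses only contractible cone summands. That order is not essential, and your degree-$(k+1)$ homology argument via the inclusion--exclusion sequence \wref{eqmultiple} is the right computation at the formal pushout objects. But in either order the splitting apparatus must actually be supplied, and your proposal defers it rather than providing it.
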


\begin{proof}
We construct $\Phi$  by the following procedure, mimicking the construction of \S \ref{dedd} inside the given $\fY$ (assuming for simplicity that \w[,]{k=0} as above). The map $\Omega$ itself will be defined in Step VII at the end of the process. As before, let \w{\{\alpha\sp{i}\}\sb{i=1}\sp{n\sb{\init}}} and 
\w{\{\omega\sp{j}\}\sb{j=1}\sp{n\sb{\term}}} be the initial and terminal objects of $\sI$, respectively\vsm.

\noindent\textbf{Step I:}\ 
Note that \w{H\sb{0}\fY(\alpha)} is the bottom summand of the subdiagram \w{\hF{0}\wY(\alpha)} of \w{\wY(\alpha)} for each  \w{\alpha\in\sJ}  (see \S \ref{dedd}), and since \w{\fY\sb{0}=Z\sb{0}\fY} we use the quotient map \w{q:Z\sb{0}\fY~\to~H\sb{0}\fY} to define 
\w{\Phi\sb{0}:\fY(\gamma)\sb{0}\to H\sb{0}\wY(\gamma)} in \w{\hF{0}\wY(\gamma)\sb{0}\subseteq\wY(\gamma)\sb{0}}
starting with \w[,]{\gamma=\alpha\sp{i}} (where $q$ is an isomorphism), and continuing by naturality for the image
of \w{H\sb{0}\fY(\alpha\sp{i})} in any \w{H\sb{0}\fY(\gamma)} with
\w{\gamma>\alpha\sp{i}} along the unique path from  \w{\alpha\sp{i}} to
any of the terminal  objects \w[\vsm.]{\omega\sp{j}} 

\noindent\textbf{Step II:}\ 
Next, for \w{\alpha<\beta} in $\sJ$, let \w{\oK\sp{\alpha}\sb{\beta}=\oK\sp{\alpha}\sb{\beta}\fY} denote the
subspace of  \w{K\sp{\alpha}\sb{\beta}\subseteq H\sb{0}\fY(\alpha)} consisting of
elements $x$ which go to zero along a \emph{unique} path from $\alpha$ to $\beta$ \wh
that is, if \w{0\neq x\in K\sp{\alpha}\sb{\gamma}\cap K\sp{\alpha}\sb{\delta}} for 
\w[,]{\alpha<\gamma,\delta<\beta} then \w{\delta\leq\gamma} or \w[.]{\gamma\leq\delta}
Let \w[.]{\oK\sp{\alpha}=\oK\sp{\alpha}\fY:=\bigcap\sb{j=1}\sp{n\sb{\term}}\ \oK\sp{\alpha}\sb{\omega\sp{j}}}

The idea is that elements in \w{\oK\sp{\alpha}} cannot contribute to pushout squares of the form \wref[,]{eqsimplesec}
and thus to secondary homotopy invariants of the form \w[;]{d\sp{2}} therefore, we may disregard them, assuming we
have a way to split the cones \w{C\oK\sp{\alpha}} off $\fY$ from the first time they appear, and  send these summands 
to zero in $\wY$\vsm. 

\noindent\textbf{Step III:}\ 
It remains to deal with those elements $x$ which \emph{do} belong to incomparable pairs, and thus may contribute to a 
\ww{d\sp{2}}-differential.  This is done by an outer induction on the filtration of the $\alpha$ where 
\w{0\neq x\in H\sb{0}\fY(\alpha)} first appears. 

For this purpose, we shall require some additional notation (see \S \ref{dedd}\,II above): for any 
\w[,]{\delta\in\sJ\setminus\cF{0}\sJ} we have \w{H'\fY(\delta)\subseteq H\sb{0}\fY(\delta)} spanned by the images of 
\w{H\sb{0}\fY(\alpha\sp{i})\to H\sb{0}\fY(\delta)} for all \w[.]{1\leq i\leq n\sb{\init}} Note that 
non-bounding cycles coming from distinct initial \w{\alpha\sp{i}} and \w{\alpha\sp{j}} may represent the 
same element in  \w[.]{H\sb{0}\fY(\gamma)}
Since \w[,]{H\sb{0}\fY(\alpha\sp{i})=\fY(\alpha\sp{i})\sb{0}} we may choose a splitting for 
\w[.]{H'\fY(\delta)\subseteq\fY(\delta)\sb{0}}

Again, let \w{\tJ:=\sJ\setminus\cF{0}\sJ} and define a diagram \w{\tY:\tJ\to\Chp} by setting \w{\tY(\delta)\sb{0}}
to be the chosen complement of  \w{H'\fY(\delta)} in \w[,]{\fY(\delta)\sb{0}} and 
\w[.]{\tY(\delta)\sb{1}:=\fY(\delta)\sb{1}} 
Again \w{\cF{s}\tJ=\cF{s+1}\sJ} for all \w[,]{s\geq 0} Note that we can extend $\tY$ to all of $\sJ$ by zero,
thus allowing us to repeat this process and define \w{\tY\up{k}:\sJ\to\Chp} inductively to be 
\w{\widetilde{\tY\up{k-1}}:\sJ\to\Chp} (the result of applying the above procedure \w{\fY\mapsto\tY} to \w[,]{\tY\up{k-1}}
extended by zero), with \w[\vsm.]{\tY\up{0}:=\fY}

\noindent\textbf{Step IV:}\ 
We start with
\begin{myeq}\label{eqkalphai}
\cK{0}\fY(\alpha\sp{i})~
:=~\bigcup\sb{(\alpha\sp{i},\Gamma)\in\wPath\sb{\sJ}}\,K\sb{\Gamma}\sp{\alpha\sp{i}}\fY~
\subseteq H\sb{0}\fY(\alpha\sp{i})=\fY(\alpha\sp{i})\sb{0}~.
\end{myeq}
\noindent Evidently \w[,]{\oK\sp{\alpha\sp{i}}\cap \cK{0}\fY(\alpha\sp{i})=0} so we may choose splittings of both of these subspaces
off \w[,]{\fY(\alpha\sp{i})\sb{0}} with complement consisting of those elements which persist in homology to some terminal \w[.]{\omega\sp{j}}

%
Since $\fY$ is cofibrant, these summands embed in \w{\fY(\gamma)\sb{0}} for each
\w{(\alpha\sp{i},\Gamma,\beta)\in\Path\sb{\sJ}} with \w[.]{\gamma\in\Gamma} If 
\w[,]{\alpha\sp{i}\neq\alpha\sp{j}} the images of \w{\cK{0}\fY(\alpha\sp{i})} and 
\w{\cK{0}\fY(\alpha\sp{j})} in \w{\fY(\gamma)\sb{0}} are disjoint, for the same reason. We proceed by an inner induction on the filtration of $\gamma$:

If \w[,]{\gamma\in\cF{1}\sJ\setminus\cF{0}\sJ} let 
\begin{myeq}\label{eqdone}
\DDY{1}{\gamma}{0}~:=~\bigoplus\sb{\alpha\sp{i}\leq\gamma}~
\ K\sp{\alpha\sp{i}}\sb{\gamma}\fY\cap \oK\sp{\alpha\sp{i}}\fY~\subseteq~\bigoplus\sb{\alpha\sp{i}\leq\gamma}\,\fY(\alpha\sp{i})\sb{0}~\subseteq~
\fY(\gamma)\sb{0}~,
\end{myeq}
\noindent and
\begin{myeq}\label{eqeone}
\EEY{1}{\gamma}{0}~:=~\bigoplus\sb{\alpha\sp{i}\leq\gamma}~
\bigcup\sb{\omega\sp{j}>\gamma}~\bigcup\sb{\{\gamma,\delta\}\in\Incomp(\ajb{\alpha\sp{i}}{\omega\sp{j}})}~
\ K\sp{\alpha\sp{i}}\sb{\{\gamma,\delta\}}\fY~\subseteq~\bigoplus\sb{\alpha\sp{i}\leq\gamma}\,\fY(\alpha\sp{i})\sb{0}~\subseteq~\fY(\gamma)\sb{0}~.
\end{myeq}
\noindent Again \w[,]{\DDY{1}{\gamma}{0}\cap\EEY{1}{\gamma}{0}=0} and both are coned off in \w[,]{\fY(\gamma)\sb{0}} so we may split their direct sum off the latter, and choose
\begin{enumerate}
\renewcommand{\labelenumi}{(\alph{enumi})~}
\item a splitting \w{\rhu{\gamma}} for the image of \w{d\sb{1}:\fY(\gamma)\sb{1}\to\fY(\gamma)\sb{0}} landing in 
\w[,]{\DDY{1}{\gamma}{0}} with \w{\DDY{1}{\gamma}{1}} defined to be the image of \w{\rhu{\gamma}} (a summand in 
\w[);]{\fY(\gamma)\sb{1}}
\item a splitting \w{\sigu{\gamma}} for the image of\w{d\sb{1}:\fY(\gamma)\sb{1}\to\fY(\gamma)\sb{0}} in 
\w[,]{\EEY{1}{\gamma}{0}} with \w{\EEY{1}{\gamma}{1}} the corresponding summand \w[.]{\fY(\gamma)\sb{1}}
\end{enumerate}

We extend both splittings so as to commute with the inclusions \w{\gamma<\delta} along
any chain from $\gamma$ to \w[,]{\omega\sp{j}} thus defining \w{\DDY{1}{\delta}{\ast}} and \w{\EEY{1}{\delta}{\ast}} for any \w[.]{\delta\in\sJ\setminus\cF{0}\sJ}

We then define $\Phi$ on these two kinds of summands in \w{\fY(\gamma)\sb{\ast}}
by sending the chain complex summand \w{\DDY{1}{\gamma}{\ast}} to zero, and \w{\EEY{1}{\gamma}{\ast}}
isomorphically to the third summand in \wref[,]{eqfone} landing in \w{\hF{1}\wY(\gamma)} of \S \ref{dedd}\vsm.

\noindent\textbf{Step V:}\ 
Note that defining \w{D\fY(\gamma)\sb{0}:=\bigoplus\sb{\alpha\sp{i}\leq\gamma}~
\ K\sp{\alpha\sp{i}}\sb{\gamma}\fY\cap \oK\sp{\alpha\sp{i}}\fY} as in \wref{eqdone} makes sense for any 
\w[,]{\gamma\in\sJ\setminus\cF{0}\sJ} but if \w{\gamma\in\cF{m}\sJ\setminus\cF{0}\sJ} for \w[,]{m>1} it further 
splits by the filtration where the homology classes first vanish (along the unique path from \w{\alpha\sp{i}} via $\gamma$
to \w[)]{\omega\sp{j}} as
\begin{myeq}\label{eqdsplit}
D\fY(\gamma)\sb{0}~=~\bigoplus\sb{s=1}\sp{m}\ \DDY{s}{\gamma}{0}~,\hs\text{a split summand in}\hsm \fY(\gamma)\sb{0}~.
\end{myeq}
\noindent (compare \wref[).]{eqsplittwo}

The summands are defined by induction on \w[,]{m\geq 1} starting with  
\w{\DDY{1}{\gamma}{0}} defined in Step IV (where the first vanishing is in filtration $1$). Assume by induction that \wref{eqdsplit} has been defined for \w{m-1} (and extended to all $\sJ$ under the inclusions \w{\gamma<\delta} as before). Thus for \w{\gamma\in\cF{m}\sJ\setminus\cF{0}\sJ} we have split off all summands but one from 
\w[,]{D\fY(\gamma)\sb{0}} and the remainder \w{\DDY{m}{\gamma}{0}}  consists by definition of those elements in \w{K\sp{\alpha\sp{i}}\sb{\gamma}\fY\cap \oK\sp{\alpha\sp{i}}\fY} \wb{i=1,\dotsc n\sb{\init}} which did not vanish in any earlier filtration.  As before, we can choose splittings for \w{d\sb{1}} to define (contractible) chain complex 
summands \w{ \DDY{s}{\gamma}{\ast}} \wb{1\leq s\leq m} in \w[.]{\fY(\gamma)}

Similarly, we can use  \wref{eqeone} to define \w{E\fY(\gamma)\sb{0}} for any 
\w{\gamma\in\cF{m}\sJ\setminus\cF{0}\sJ} \wb[,]{m>1} and can split it into a direct sum by the filtration of first vanishing:
\begin{myeq}\label{eqesplit}
E\fY(\gamma)\sb{0}~=~\bigoplus\sb{s=1}\sp{m}\ \EEY{s}{\gamma}{0}~,
\end{myeq}
\noindent itself a split summand in \w[.]{\fY(\gamma)\sb{0}} The summand \w{\EEY{s}{\gamma}{0}} is defined by induction, starting with \w{\EEY{1}{\gamma}{0}} of Step IV. Assume that \wref{eqesplit} has been defined for \w{m-1} and extended to all $\sJ$; then for \w{\gamma\in\cF{m}\sJ\setminus\cF{0}\sJ} we have 
split off all summands but one from  \w[,]{E\fY(\gamma)\sb{0}} and the remainder \w{\EEY{m}{\gamma}{0}}  consists by definition of those elements in \w{K\sp{\alpha\sp{i}}\sb{\{\gamma ,\delta\}}} \wb{i=1,\dotsc n\sb{\init}} which did not vanish for any 
\w[.]{\gamma'<\gamma}  As before, we can choose splittings for \w{d\sb{1}} to define (contractible) chain complex 
summands \w{ \EEY{s}{\gamma}{\ast}} \wb{1\leq s\leq m} in \w[\vsm.]{\fY(\gamma)}

\noindent\textbf{Step VI:}\ 
The construction of \w{\Phi} now proceeds by induction on the filtration length of
\w{\alpha\in\sJ} from  the initial objects
\w[,]{\cF{0}\sJ=\{\alpha\sp{i}\}\sb{i=1}\sp{n\sb{\init}}} where for each $\gamma$ in filtration \w{\cF{m}\sJ\setminus\cF{m-1}\sJ} \wb[,]{m\geq 2} we have chosen split summands
\begin{myeq}\label{eqsplitsum}
\bigoplus\sb{s=1}\sp{m}\ \left[ H'\tY\up{s-1}(\gamma)~\oplus~D\tY\up{s-1}(\gamma)\sb{\ast}~\oplus~
E\tY\up{s-1}(\gamma)\sb{\ast}\right]
\end{myeq}
\noindent in \w{\fY(\gamma)} (in the notation of Steps II-IV). The summands \w{D\tY\up{s-1}(\gamma)\sb{\ast}} are 
sent to $0$ under $\Phi$, the summands \w{E\tY\up{s-1}(\gamma)\sb{\ast}} are sent isomorphically to the third summand in \wref{eqftwo} (in each filtration), and the $0$-cycles were already sent to the \w{H\sb{0}\fY} summands in $\wY$ in Step I. Having split off all the cones elements in \w[,]{\fY(\gamma)\sb{1}} we are left only with the $1$-cycles (modulo the boundaries \wh see \wref[),]{eqtrunc} which we  send to isomorphically to the \w{H\sb{1}\fY} summands in $\wY$\vsm. 

\noindent\textbf{Step VII:}\ 
The resulting map \w{\Phi:\tau\sb{1}\wYn{2}\to\wY} has a section
\w[,]{\Omega':\wY\to\tau\sb{1}\wYn{2}} which is an inclusion into the appropriate summands (and thus still a weak equivalence).
This extends to a natural transformation \w{\Omega:\wY\to\wYn{2}} of \ww{\hJIn{2}}-diagrams, because $\wY$ is zero above dimension $1$. Since it is an isomorphism on \w{\hJIn{2}} away from the images of \w[,]{(\sigma\sb{\gamma},\rho\sb{\gamma})} which are inclusions into cones, and thus do not contribute to the weak homotopy type of \w[,]{\fY(\alpha)} the map $\Omega$ is a $1$-equivalence (in the general case, a \wwb{k+1}equivalence).
\end{proof}

\begin{corollary}\label{cexptrun}
The  \wwb{k+1}truncation of a $k$-hybrid diagram \w{\fY:\sJ\to\Chp} 
may be recovered up to weak equivalence from its global derived diagram 
\w[.]{\fY':\hJD\to\grV} 
\end{corollary}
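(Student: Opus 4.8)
The plan is to read this off Proposition \ref{pexptrun}, after recording the (essentially bookkeeping) point that the expanded derived diagram is a construction whose only input is $\fY'$.

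First I would observe that the construction $\fY'\mapsto\Exp\fY'=\wY$ of Definition \ref{dedd} uses nothing beyond the data packaged in $\fY'$: the graded vector spaces $H_{\ast\leq k+1}\fY$ carried by the copy of $\sJ$ in $\hJD$, the kernels $K^{\alpha}_{\Gamma}$ appearing as the sources of the diagrams \eqref{eqdergammak} together with the inclusions $j_{\alpha}$, and the evaluation maps $\psi$ and formal differentials recorded by \eqref{eqsquare}. The only further ingredients are auxiliary splitting choices, such as those in \eqref{eqsplitone}, which affect neither the isomorphism type of the indexing category $\hJIn{2}$ nor the weak homotopy type of $\wY$. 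Hence any diagram extracted from $\wY$ is, up to weak equivalence, extracted from $\fY'$ alone.

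Next I would invoke Proposition \ref{pexptrun}, which provides a \wwb{k+1}equivalence $\Omega:\wY\to\wYn{2}$ of $\hJIn{2}$-indexed diagrams with weak inverse $\Phi:\tau\sb{k+1}\wYn{2}\to\wY$, where $\wYn{2}$ is the $\Ind^{[2]}$-completion of $\fY$ (see \S \ref{sindcomp}). I would then use three elementary facts. (i) Restriction along the inclusion $\sJ\hra\hJIn{2}$ sends $\wYn{2}$ to $\fY$, since the $\Ind$-completion extends $\fY$ while leaving its values on the original objects of $\sJ$ unchanged. (ii) The truncation $\tau\sb{k+1}$ is computed object-wise (Definition \ref{dtrunc}), hence commutes with this restriction; moreover $\wY$ is zero above dimension $k+1$ (Step VII of the proof of Proposition \ref{pexptrun}), so $\tau\sb{k+1}\wY=\wY$. (iii) A \wwb{k+1}equivalence of $\hJIn{2}$-diagrams restricts to one of $\sJ$-diagrams, as the inclusion is the identity on the objects of $\sJ$ and equivalences are tested object-wise. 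Combining these, applying $\tau\sb{k+1}$ to $\Omega$ and restricting to $\sJ$ yields
$$
\tau\sb{k+1}\fY~=~\tau\sb{k+1}\bigl(\wYn{2}\rest{\sJ}\bigr)~\simeq~\wY\rest{\sJ}~=~\bigl(\Exp\fY'\bigr)\rest{\sJ}~,
$$
with the weak equivalence realized by $\Phi\rest{\sJ}$. The right-hand side depends only on $\fY'$, which is exactly the assertion.

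Since Proposition \ref{pexptrun} carries the genuine content, there is no real obstacle here; the one place deserving care is fact (i) --- confirming that the first-stage $\Ind$-completion $\wYn{2}$ restricts on $\sJ$ to $\fY$ itself rather than to some colimit-completed or truncated surrogate. This is immediate from the definition of the $\Ind$-completion as a colimit-preserving extension, but it is the hinge that lets us pass from a statement about $\hJIn{2}$-diagrams back to the original $\sJ$-indexed diagram $\fY$.
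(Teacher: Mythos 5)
Your proposal is correct and follows essentially the same route as the paper, which states this corollary without a separate proof as an immediate consequence of Proposition \ref{pexptrun}: the expanded derived diagram \w{\wY=\Exp\fY'} is built from \w{\fY'} alone (up to splitting choices that do not affect the weak homotopy type), and the \wwb{k+1}equivalence \w{\Omega:\wY\to\wYn{2}} with weak inverse $\Phi$ recovers \w{\tau\sb{k+1}\fY} after restricting along \w[.]{\sJ\hra\hJIn{2}} Your explicit verification of the bookkeeping points \wh that \w{\Exp} depends only on \w[,]{\fY'} that \w{\wYn{2}\rest{\sJ}=\fY} and that \w{\tau\sb{k+1}\wY=\wY} \wh is exactly the content the paper leaves implicit.
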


\begin{defn}\label{dhybrdztn}
Assume given a $k$-hybrid diagram \w{\fY:\sJ\to\Chp} and its global derived diagram \w{\fY':\hJD\to\grV\subseteq\Chp} (a \wwb{k+1}hybrid \wwb{k+1}-truncated diagram). 
We enlarge the derived category \w{\hJD} to a category \w{\hJH} which also includes 
\w{\hJIn{2}} as a subcategory, by adding structure maps for all (formal) colimits 
\w{\hK\sp{\alpha}\sb{\Gamma'}} of \wref[.]{eqdergammak}
We then extend \w{\fY'} to a functor \w{\fY'':\hJH\to\grV\subseteq\Chp} by giving the value $0$ to these structure maps. Similarly, we extend \w{(\wYn{2})\lra{k+1}:\hJIn{2}\to\Chp} (see \S \ref{sindcomp}) \ -- \ by zero, where undefined \ -- \ to a functor \w[.]{\wYu:\hJH\to\Chp} 

We have another diagram \w[,]{\fZ:\hJH\to\Chp}
taking the maps \w{\hPsi} and their restrictions to the various \w{\hK\sp{\alpha}\sb{\Gamma'}} in \wref[,]{eqdergammak} for all \w{(\alpha,\Gamma,\beta)} in \w{\Path\sb{\sJ}} and all \w{\Gamma'\subsetneq\Gamma} to the corresponding induced maps in \w{Z\sb{k+1}\wYn{2}} (again extending by zero, where undefined).
Since the inclusion 
\w{\inc:Z\sb{k+1}\wYn{2}(\alpha)\hra (\wYn{2})\lra{k+1}(\alpha)} is a map in \w{\Chp}
for each \w[,]{\alpha\in\hJIn{2}} we have a map of diagrams \w[,]{\fZ\to\wYu}
and the quotient maps\w{q\sb{k+1}:Z\sb{k+1}\wYn{2}\to H\sb{k+1}\wYn{2}}
induce another map of \ww{\hJIn{2}}-diagrams \w[.]{\fZ\to\fY'}

The \emph{\wwb{k+1}hybrid approximation} \w{\oHyb{k+1}\fY:\hJH\to\Chp} of $\fY$ is defined to be the pushout:
$$
 \xymatrix
 {
   \fZ \ar[rr] \ar[d]  && \fY'' \ar[d] \\
   \wYu\lra{k+1} \ar[rr] && 
   \oHyb{k+1}\fY 
 }
$$
\noindent The pushout assigns to the maps \w{\hPsi} and the induced maps \w[,]{\hpsi\sb{i,j}} and so on, in \wref{eqdergammak} the corresponding maps \w{\psi\sb{i,j}} of \wref[,]{eqdergamma} which also occur 
in \w[.]{(\wYn{2})\lra{k+1}}
Similarly for the maps of \w[,]{H\sb{k+1}\fY} which occur in 
\S \ref{dderdiag}(a) and are
\w{[\fY\lra{n}(\alpha)]\sb{n+1}=Z\sb{n+1}\fY(\alpha)\epic H\sb{n+1}\fY(\alpha)} for each $\alpha$ in \wref[,]{eqconnc} as in the proof of Proposition \ref{pexptrun}.

Note that the \wwb{k+1}hybrid approximation \w{\oHyb{k+1}\fY} to $\fY$ is indeed \wwb{k+1}hybrid, since the two diagrams \w{\fY'} and \w{(\wYn{2})\lra{k+1}} used in the pushout are such.
\end{defn}

\begin{thm}\label{thybrdztn}
A $k$-hybrid diagram $\fY$ may be recovered up to weak equivalence from its \wwb{k+1}hybrid approximation \w[.]{\oHyb{k+1}\fY}
\end{thm}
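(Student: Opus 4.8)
The plan is to exhibit $\oHyb{k+1}\fY$ as a realization, up to weak equivalence, of the canonical decomposition of $\fY$ into its $(k+1)$-truncation and its $(k+1)$-connected cover. Because $\Ch$ is stable, the square
\[
\xymatrix{
\fY\lra{k+1} \ar[r] \ar[d] & \fY \ar[d] \\
H\sb{k+1}\fY \ar[r] & \tau\sb{k+1}\fY
}
\]
is a homotopy pushout: by \wref{eqcofibs} the horizontal cofibers are both $\tau\sb{k}\fY$, and the induced map between them is the identity. Thus it suffices to recover from $\oHyb{k+1}\fY$ the three corners $\tau\sb{k+1}\fY$, $\fY\lra{k+1}$ and $H\sb{k+1}\fY$ together with their attaching maps, and to verify that the defining pushout of Definition~\ref{dhybrdztn} computes this homotopy pushout on $\sJ$.

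First I would recover the $(k+1)$-truncation from the factor $\fY''$, which is the extension of the global derived diagram $\fY'$ and is $(k+1)$-formal by Lemma~\ref{lderform}. By Corollary~\ref{cexptrun} \ -- \ whose proof passes through the expanded derived diagram $\Exp\fY'$ and the $(k+1)$-equivalence $\Omega\colon\Exp\fY'\to\wYn{2}$ of Proposition~\ref{pexptrun} \ -- \ the diagram $\fY'$ determines $\tau\sb{k+1}\fY$ up to weak equivalence. This is the step in which the formal secondary data $\hdt{i,j}$ and $\psi\sb{i,j}$ recorded in $\fY'$ is de-formalized into genuine differentials; the expansion machinery of \S~\ref{dedd} is essential here, since a naive restriction of $\fY''$ to $\sJ$ yields the homology $H\sb{\ast\leq k+1}\fY$ rather than $\tau\sb{k+1}\fY$ itself.

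Next I would recover the connected cover and the gluing. The factor $\wYu\lra{k+1}$ is by construction the $(k+1)$-connected cover of the Ind-completion $\wYn{2}$, and restricting along $\sJ\hookrightarrow\hJIn{2}$, where $\wYn{2}$ agrees with $\fY$, returns $\fY\lra{k+1}$ (minimally cofibrant, as $\fY$ is $k$-hybrid). The gluing object $\fZ$ is the complex of $(k+1)$-cycles $Z\sb{k+1}\wYn{2}$; its leg $q\sb{k+1}\colon Z\sb{k+1}\to H\sb{k+1}$ into $\fY''$ identifies $\fZ$ with $H\sb{k+1}\fY$ up to weak equivalence, while its leg $\inc\colon Z\sb{k+1}\hookrightarrow\wYu\lra{k+1}$ is the cofibration noted in Definition~\ref{dhybrdztn}. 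As that leg is a cofibration, the strict pushout of Definition~\ref{dhybrdztn} is a homotopy pushout, and by construction it identifies the formal evaluation maps $\hpsi\sb{i,j}$ with the honest maps $\psi\sb{i,j}$, so its attaching data agrees with that of the square above.

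Assembling the three identifications, $\oHyb{k+1}\fY$ computes the homotopy pushout of $\tau\sb{k+1}\fY\leftarrow H\sb{k+1}\fY\to\fY\lra{k+1}$, which by the square above is $\fY$. I expect the main obstacle to lie in this assembly: the two factors live over different enlargements, $\hJD$ and $\hJIn{2}$, of $\sJ$ inside $\hJH$, so one must argue that their glued restriction to $\sJ$ is genuinely the homotopy pushout $\fY$ and not the homologically incorrect strict pushout obtained by restricting each factor to $\sJ$ separately. Resolving this means working through the Ind-completion \ -- \ recovering $\wYn{2}$ (through degree $k+1$) from $\oHyb{k+1}\fY|\sb{\hJIn{2}}$ via Proposition~\ref{pexptrun}, using that $\wYn{2}$ determines $\fY$, and checking that the expansion recovering $\tau\sb{k+1}\fY$ is compatible along $\fZ$ with the connected-cover factor.
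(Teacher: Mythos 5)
Your overall route is the paper's: both arguments treat the defining pushout of \w{\oHyb{k+1}\fY} as the object to be inverted, and both lean on Proposition \ref{pexptrun} to de-formalize the \w{\fY''} factor. But the step you defer to your last paragraph \wh the compatibility of the two factors along $\fZ$ in degree \w{k+1} \wh is not a loose end to be ``worked through''; it is the entire content of the paper's proof, and the shortcut you propose around it (the stable fracture square) does not close it. The paper replaces \w{\fY''} by \w{\wY=\Exp\fY'} to get an expanded pushout $\tY$ together with a map \w{\Psi:\tY\to\wYn{2}} assembled from $\Omega$ and \w[;]{\iota\sb{k+1}} since $\Omega$ vanishes in degrees \w{>k+1} and \w{\iota\sb{k+1}} in degrees \w[,]{<k+1} only degree \w{k+1} needs checking, and there the argument consists of two concrete facts: (i) for each \w{(\alpha,\Gamma,\beta)} the complex \w{C=\hocolim\sb{\ajb{\alpha}{\Gamma}}\fY} has \w{Z\sb{k+1}C=H\sb{k+1}C} canonically isomorphic to the colimit of \wref[,]{eqdergamma} so $\wY$ and \w{(\wYn{2})\lra{k+1}} literally agree on the image of $\fZ$; and (ii) the splittings of \w{q\sb{k+1}} obtained by running the $\Phi$-construction of Proposition \ref{pexptrun} on \w{(\wYn{2})\lra{k+1}} align the two copies of \w[.]{H\sb{k+1}\wYn{2}(\alpha)} Neither fact appears in your argument.

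Two of your intermediate assertions are also wrong, and they explain why the fracture-square framing cannot substitute for the above. First, $\fZ$ is built from \emph{cycles}, not homology: its values are \w{Z\sb{k+1}\wYn{2}} concentrated in degree \w{k+1} with zero differential, so the leg \w{q\sb{k+1}:\fZ\to\fY''} is a surjection with kernel the boundaries \w{B\sb{k+1}} and is \emph{not} a weak equivalence (minimal cofibrancy forces \w{B\sb{\ast}=0} only at initial objects, where there are no cone summands). Hence the span \w{(\wYn{2})\lra{k+1}\leftarrow\fZ\to\fY''} is not your span \w[.]{\fY\lra{k+1}\leftarrow H\sb{k+1}\fY\to\tau\sb{k+1}\fY} Second, and consequently, \w{\oHyb{k+1}\fY} restricted to $\sJ$ is \emph{not} weakly equivalent to $\fY$: pushing out along \w{Z\sb{k+1}\to H\sb{k+1}} kills the differential \w{d\sb{k+2}} into degree \w[,]{k+1} which is precisely what makes the result \wwb{k+1}formal and in general changes \w[.]{H\sb{k+2}} The theorem asserts \emph{recoverability}, not equivalence; the recovery must pass through the derived data indexed on \w{\hJH\setminus\sJ} and the expansion $\Exp$, i.e., through the comparison $\Psi$ above. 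Your closing paragraph correctly names this obstacle, but only restates it rather than resolving it.
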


\begin{proof}
Applying the procedure described in \S \ref{dedd}, we may expand the pushout diagram \w{\oHyb{k+1}\fY} 
(restricted to \w[)]{\hJIn{2}} into the pushout diagram
$$
 \xymatrix
 {
   \fZ \ar[rr] \ar[d]  && \wY=\Exp\fY' \ar[d] \ar@/^2ex/[rrdd]\sp{\Omega} \\
   \wYn{2}\lra{k+1} \ar@/_2ex/[rrrrd]\sb{\iota\sb{k+1}} \ar[rr] && 
   \tY \ar@{.>}[rrd]\sp{\Psi}\\
&& && \wYn{2}
 }
$$
\noindent in the category of diagrams from \w{\hJIn{2}} to \w[.]{\Chp}

The \wwb{k+1}equivalence $\Omega$ of Proposition \ref{pexptrun} is zero in degrees \w[,]{>k+1} while the covering map \w{\iota\sb{k+1}} (see \S \ref{dtrunc}) is zero in degrees \w[,]{<k+1} and an isomorphism in degrees \w[,]{>k+1} so we need only verify the compatibility of the two maps in degree \w[.]{k+1} 

Given $m$ incomparable objects \w{\Gamma=\{\gamma\sb{i}\}\sb{i=1}\sp{m}} in \w{\acj{\alpha}} with common join $\beta$, from the description in 
\S \ref{smulpr} we see that the chain complex \w{C:=\hocolim\sb{\ajb{\alpha}{\Gamma}}\fY} has \w{Z\sb{k+1}C=H\sb{k+1}C} canonically isomorphic
to the colimit of \wref{eqdergamma} (omitting the rightmost term) \wh which is precisely the value of \w{\fY'} at \w{(\alpha,\Gamma)} in 
\w[.]{\wPath\sb{\sJ}} This is also true for all \w[,]{\Gamma'\subset\Gamma} so \w{\fY'} (or \w$\wY$) and \w{(\wYn{2})\lra{k+1}} agree on the
maps $\hPsi$ and the induced maps \w[,]{\hpsi\sb{i,j}} etc., of $\fZ$.

Note that \w{(\wYn{2})\lra{k+1}} is itself \wwb{k+1}hybrid (see \S \ref{dformal}), so we may apply the construction of $\Phi$ in the proof of Proposition \ref{pexptrun} to \w{(\wYn{2})\lra{k+1}} in dimension \w{k+1} to obtain splittings for 
the \wwb{k+1}structure maps 
$$
q\sb{k+1}:\wYn{2}\lra{k+1}(\alpha)\sb{k+1}=Z\sb{k+1}\wYn{2}\lra{k+1}(\alpha)~\to~ H\sb{k+1}\wYn{2}(\alpha)
$$
\noindent for all \w[.]{\alpha\in\hJIn{2}} These splittings guarantee that the copies of \w{H\sb{k+1}\wYn{2}(\alpha)} in \w{\wYn{2}\lra{k+1}}
and \w{\wY} are aligned (via $\fZ$), so they map identically to 
\w[,]{\wYn{2}} guaranteeing that $\Psi$ is indeed a weak equivalence in dimension \w[,]{k+1} too.
\end{proof}

\begin{defn}\label{dhybrd}
Theorem \ref{thybrdztn} allows us to define the \emph{$k$-hybridization} \w{\Hyb{k}\fX:\sIu{k}\to\Chp} of an arbitrary (minimally cofibrant) diagram \w[,]{\fX:\sI\to\Ch} as outlined in \S \ref{soas}, by induction on \w[,]{k\geq 0} starting with \w[,]{\Hyb{0}\fX:=\fX}
by setting \w{\Hyb{k}\fX:=\oHyb{k}(\Hyb{k-1}\fX)} and \w{\sIu{k}:=(\sIu{k-1})\sb{\Hybr}} for \w[.]{k>0}

 It also allows us to define the \emph{$k$-th derived diagram} \w{\Der{k}\fX:\hIDu{k}\to\grV} of an arbitrary (minimally cofibrant) diagram \w{\fX:\sI\to\Ch} by induction on \w[,]{k\geq 0} starting with \w[,]{\Der{1}\fX:=\fX'}
by setting \w{\Der{k}\fX:=(\Hyb{k-1}\fX)'} for \w[.]{k>1}
\end{defn}

\begin{remark}\label{rfibrant}
In our discussion so far we have made much use of the fact that all diagrams were made cofibrant (in the model category structure of \cite[\S 11.6]{HirM}, although we did not describe it explicitly). It is of course possible to make them fibrant instead: the only difference this would make in our constructions would be to reverse the order of the formal differentials and the evaluation map in Definition \ref{dmaxcoll} (with the value unchanged). In addition, in the proof of Proposition \ref{pexptrun} we would need to carry out our inductive process from terminal to initial objects in $\sI$. Note that if \w{\fX(\alpha)} is of finite type for each \w[,]{\alpha\in\sI} we can pass between the two versions by taking vector space duals of the diagrams (with the resulting cochain complexes then non-positively graded). As we shall see in Section \ref{css} below, there are sometimes technical advantages to preferring the fibrant version.

An approach which is perhaps preferable conceptually is to work in the ``linear $\infty$-category'' of
\w{\Ch} enriched in (unbounded) chain complexes. However, this would make the point-set descriptions used throughout this paper needlessly complicated.
\end{remark}
 %
 %
 \sect{Examples of higher order homotopy invariants}
 \label{cetoi}
 
 We now apply our methods to describe the complete set of higher order homotopy invariants (in graded vector spaces) for the weak homotopy type of two simple but important infinite sequences of diagrams in chain complexes. 

\begin{mysubsection}{Cubes}\label{mechcube}
\label{scube}
 Let \w{\sI\sp{n}} denote the lattice of subsets of \w[,]{[n] = \{1, 2, \cdots , n\}} which we shall think of an $n$-dimensional cube with vertices labelled by 
 \w{J=(\epsilon_1, \epsilon_2, \cdots, \epsilon_n)} with each \w{\epsilon_i \in \{0,1\}} 
 (so \w{J} is the characteristic function of a subset of \w[).]{[n]} 
 We denote by \w{J_A} the vertex associated  to \w[,]{A\subseteq[n]} and we have a unique map  \w{J\sb{B}\to J\sb{A}} in \w{\sI\sp{n}} exactly when \w{A\subseteq B} (the reason for reversing the direction will become apparent in \S \ref{sdss} below).

 For a fixed \ww{\bF}-vector space \w{V} define the diagrams \w{\fD^n_V:  \sI^n \to \Ch}  by 
 $$
\fD^n_V(J\sb{A})=
\begin{cases}
    K(V, 0) & \text{if}\ A=[n]\\
    K(V, n-1) & \text{if}\ A=\emptyset\\
    C^{n-|A|} K(V,0) & \text{otherwise},
\end{cases}
$$
\noindent where \w{C\sp{k}} is the appropriate cone (see \S \ref{snac}) and \w{|A|} denotes the cardinality of $A$.
The maps are determined by requiring that the diagram 
 \w{(\fD^n_V)\rest{\sI^n\setminus \vec{0}}} (omitting the last vertex \w[)]{\vec{0}:=(0, \cdots, 0)} be minimally cofibrant. 
 Note that the homotopy colimit for this sub-diagram is weakly equivalent to \w[,]{K(V, n-1)} and the weak homotopy type of \w{\fD^n_V} is determined by requiring that the structure map
 $$
 \psi\colon\hocolim_{\sI^n\setminus \vec{0}}\ \fD^n_V~\to~\fD^n_V(\vec{0})
 $$
be a weak equivalence.
For \w[,]{A \subseteq [n]} we denote the kernel  of the fold map by  
\w[,]{Z(A):=\ker(\bigoplus_{i\in A} V_i \stackrel{\nabla}{\to}  V)} where \w{V\sb{i}} is a copy of $V$. 
\end{mysubsection}

\begin{mysubsection}{The $3$-dimensional cube}
\label{sthreecube}
We describe in detail the full analysis of \w[:]{\fX=\fD^3_V} a (minimally) cofibrant version appears as the solid diagram in \wref{eqexpcubea} (with the relevant part of the \ww{\Ind}-completion \w{\wYn{2}} indicated by boxes for the colimits, with structure maps indicated by dashed arrows):
{\footnotesize
\mypdiag[\label{eqexpcubea}]{
    K(V,0) \ar[ddrr] \ar[ddddd] \ar[rrrr]& & & & 
    \binom{V_1}{V} \ar@{-->}[dddl]^-{}|(0.495)\hole \ar@{-->}[dl] \ar[ddddd]^-{}|(0.27)\hole^-{}|(0.4)\hole \ar[ddrr] & & \\
     & & & \fbox{$\binom{V\sb{1}\oplus V\sb{2}}{V}$} \ar@{-->}[drrr] & & \\
     & & \binom{V_2}{V} \ar@{-->}@<-1ex>[ddl] \ar@{-->}[ur] \ar[ddddd] \ar[rrrr]  & & & & 
     {\begin{psmallmatrix} V_{12}\\
      V_1\oplus V_2\\ V
  \end{psmallmatrix}} \ar[ddddd] \\
     & & & \fbox{$\binom{V_1\oplus V_3}{V}$} \ar@{-->}[ddr] & & &\\
     & \fbox{$\binom{V_2\oplus V_3}{V}$} \ar@{-->}[dddr] \\
    \binom{V_3}{V} \ar@{-->}[ur] \ar@{-->}@<-1ex>[uurrr]^-{}|(0.495)\hole \ar[ddrr]  \ar[rrrr]^-{}|(0.335)\hole^-{}|(0.27)\hole &  & & & {\begin{psmallmatrix} V_{13}\\
      V_1\oplus V_3\\ V
  \end{psmallmatrix}} \ar[ddrr]  & & \\
 & & & & & & \\
     & & {\begin{psmallmatrix} V_{23}\\
      V_2\oplus V_3\\ V
  \end{psmallmatrix}} \ar[rrrr] & & & &  {\begin{psmallmatrix} V_{12} \oplus V_{13} \oplus V_{23}\\
      V_1 \oplus V_2 \oplus V_3\\ V
  \end{psmallmatrix}}\simeq K(V,2)
  }}
Note that if \w{\fD^3_V} is (minimally) cofibrant, we may assume that \w{\fD^3_V(\vec{0})} is 
\w[,]{\hocolim_{\sI^3\setminus \vec{0}}\ \fD^3_V} as indicated in \wref[.]{eqexpcubea}

The $0$-connected cover \w{\wYn{2}\langle 1 \rangle} is shown in \wref[:]{eqexpcubeb}

{\footnotesize
\mypdiag[\label{eqexpcubeb}]{
    0 \ar[ddrr] \ar[ddddd] \ar[rrrr]& & & & 
    0 \ar@{-->}[dddl]^-{}|(0.495)\hole \ar@{-->}[dl] \ar[ddddd]^-{}|(0.23)\hole^-{}|(0.37)\hole \ar[ddrr] & & \\
     & & & \binom{Z(1,2)}{0} \ar@{-->}[drrr] & & \\
     & & 0 \ar@{-->}@<-1ex>[ddl] \ar@{-->}[ur] \ar[ddddd] \ar[rrrr]  & & & & 
     {\begin{psmallmatrix} V_{12}\\
      Z(1,2)\\ 0
  \end{psmallmatrix}} \ar[ddddd] \\
     & & & {\binom{Z(1,3)}{0}} \ar@{-->}[ddr] & & &\\
     & \binom{Z(2,3)}{0} \ar@{-->}[dddr] \\
    0 \ar@{-->}[ur] \ar@{-->}@<-1ex>[uurrr]^-{}|(0.495)\hole \ar[ddrr]  \ar[rrrr]^-{}|(0.4)\hole^-{}|(0.27)\hole &  & & & {\begin{psmallmatrix} V_{13}\\
      Z(1,3)\\ 0
  \end{psmallmatrix}} \ar[ddrr]  & & \\
 & & & & & & \\
     & & {\begin{psmallmatrix} V_{23}\\
      Z(2,3)\\ 0
  \end{psmallmatrix}} \ar[rrrr] & & & &  {\begin{psmallmatrix} V_{12} \oplus V_{13} \oplus V_{23}\\
      Z(1,2,3)\\ 0
  \end{psmallmatrix}}
}}

The $1$-derived diagram \w{\Der{1}\fX= \fX'} is shown in \wref{eqexpcubec}
(where we have omitted the irrelevant primary information). The maps from \w{V} to \w{Z(i,j)} (for all \w[)]{1\le i < j \le 3} are of degree one.

{\footnotesize
\mypdiag[\label{eqexpcubec}]{
    V \ar[drrr] \ar[ddddr] \ar[ddddrrr]^-{}|(0.67)\hole \ar[ddrr] \ar[ddddd] \ar[rrrr]& & & & 
    0  \ar[ddddd]^-{}|(0.27)\hole^-{}|(0.4)\hole \ar[ddrrrr] & & \\
     & & & Z(1,2) \ar@{-->}[drrrrr] & & \\
     & & 0 \ar[ddddd] \ar[rrrrrr]  & & & & & &
     0 \ar[ddddd] \\ &&&&&&\\
     & Z(2,3) \ar@{-->}[dddr]  & & Z(1,3) \ar@{-->}[dr]\\
    0  \ar[ddrr]  \ar[rrrr]^-{}|(0.335)\hole^-{}|(0.27)\hole &  & & & 0 \ar[ddrrrr]  & & \\
 & & & & & & \\
     & & 0 \ar[rrrrrr] & & & & & & 0
  }}

The hybridization \w{\oHyb{1} \fX = \Hyb{1} \fX} is shown in \wref[,]{eqexpcubee}
with the dashed arrows indicating the formal (primary) information (the vertex labeled by the boxed \w{\hocolim} should be added for the $\Ind$-completion). 
 {\footnotesize
\mypdiag[\label{eqexpcubee}]{
K(V,0) \ar[drrr]\sb{\hdt{1,2}} \ar[ddddr]\sp{\hdt{1,3}} 
\ar[ddddrrr]^-{}|(0.67)\hole\sb{\hdt{2,3}} \ar@{-->}[ddrr] \ar@{-->}[ddddd] \ar@{-->}[rrrr]& & & & 
0  \ar@{-->}[ddddd]^-{}|(0.273)\hole^-{}|(0.435)\hole \ar[ddrrrr] & & \\
& & & \binom{Z(1,2)}{0} \ar[drrrrr]\sp{\psi\sb{1,2}} & & \\
     & & 0 \ar@{-->}[ddddd] \ar@{-->}[rrrrrr]  & & & & & &
\mbox{$\begin{psmallmatrix} V\sb{12}\\ Z(1,2)\\ 0\end{psmallmatrix}$} 
\ar[ddddd] \ar[ddddddr] \\ &&&&&&\\
& \binom{Z(2,3)}{0} \ar[dddr]\sp(0.3){\psi\sb{2,3}}  & & 
\binom{Z(1,3)}{0} \ar[dr]\sp{\psi\sb{1,3}}\\
0  \ar@{-->}[ddrr]  \ar@{-->}[rrrr]^-{}|(0.35)\hole &  & &&
\mbox{$\begin{psmallmatrix} V\sb{13}\\ Z(1,3)\\ 0 \end{psmallmatrix}$} 
\ar@{-->}[ddrrrr]  & & \\
& & & & & & \\
& & 
\mbox{$\begin{psmallmatrix} V\sb{23}\\ Z(2,3)\\ 0\end{psmallmatrix}$}
\ar[drrrrrrr] \ar[rrrrrr] & & & & & & \fbox{$\hocolim$} \ar[dr]\sp{=}\\ 
&&&&&&&&& 
\mbox{$\begin{psmallmatrix} 
V\sb{12} \oplus V\sb{13} \oplus V\sb{23}\\ Z(1,2,3)\\ 0
\end{psmallmatrix}$}
}}
\noindent Here the values \w[,]{\psi\sb{1,2}}  \w[,]{\psi\sb{1,3}} and  \w{\psi\sb{2,3}}
are necessarily zero, because they land in cones, so the secondary data for the original $\fX$ all vanishes.

In the next step, starting with \w[,]{\fY:=\Hyb{1} \fX} we repeat the above steps to obtain the third-order information \wh that is, the $2$-derived diagram 
\w{\fY'= (\Hyb{1} \fX )'=\Der{2} \fX} for \w[.]{\fX=\fD^3_V} The relevant part of \w{\fY'} itself consists of the diagram $\fY$ over \w{\beta=\vec{0}} (the last vertex of the cube), shown in \wref[:]{eqexpcubef}  

\myvdiag[\label{eqexpcubef}]{
& & \tS V(1)  \ar[dr]\sp{\hat{\psi}\sb{1}} \\
V \ar[rr]\sp(0.6){\hdt{2}} \ar[urr]\sp(0.4){\hdt{1}} \ar[drr]\sb(0.4){\hdt{3}} & & 
\tS V(2)  \ar[r]\sp{\hat{\psi}\sb{2}} & H_1(\fY(\beta))=0\\ 
  &&  \tS V(3)  \ar[ur]\sb{\hat{\psi}\sb{3}}  
  }
\noindent Note that by the definition of the derived diagram \w[,]{\fY'} there is a copy of \w{\tS V} for each pair 
\w{\{Z(i,j),\,Z(i,k)\}} of facets adjacent to the initial vertex in \wref[.]{eqexpcubee} We denote this copy succinctly by \w{\tS V(i)} 
(where \w{\{i\}} is the intersection of the two pairs).
\end{mysubsection}

\begin{mysubsection}{The third order differential}
\label{sthreediff}
Note that even though the values of the three differentials in \wref{eqexpcubef} are necessarily zero, they are \emph{not} the same as those shown in \wref[,]{eqexpcubee} which landed in three different targets. Because we have a common target \w[,]{\beta=\vec{0}} 
these three values are coordinated, because the homotopy colimit of the left half of \wref{eqexpcubef} (in graded vector spaces) is isomorphic non-canonically to \emph{two} copies of \w{\Sigma V} (see \S \ref{sdhc} above), say 
\w[.]{\Sigma V\sb{I}\amalg \Sigma V\sb{II}}
Thus in order obtain the derived diagram \w{\fY'=\Der{2}\fD^3_V} from \wref{eqexpcubef} we need to 
determine the two values \w{\hat{\psi}\sb{I}} and \w{\hat{\psi}\sb{II}} from \w{\Sigma V} to \w[,]{ H_1(\fY(\beta))=0} which are of course zero.

Thus we are in the graded situation of \S \ref{egdifftwo} (but with maps of positive degree). Therefore, 
(the non-primary part of) \w{\fY'} consists of a single 
differential \w{\dt{1,2,3}:V\to H_{2}(\fD^3_V(\beta))=V} (of degree two). In our case, this is just the isomorphism in the lower right corner of \wref[,]{eqexpcubee} by construction.

Note that this isomorphism represents the value of a third order operation \wh in fact, of a \ww{d\sp{3}}-differential, in the case of a spectral sequence (see \S \ref{sdss} below). It is one of the advantages of our approach that all such higher order information is reduced to secondary structure (in a more complicated diagram), and from there to a description in graded vector spaces alone. 
\end{mysubsection}

\begin{mysubsection}{A sequence of minimal diagrams}
\label{smindiag}
The example of the three-dimensional cube has analogues in all dimensions, providing examples of 
$r$-differentials for all \w[.]{r\geq 2} However, describing these in detail necessitates introducing cumbersome new notation (see \cite{BBSenT}), so instead we now present a more economical sequence of diagrams displaying higher order information of all orders. These diagrams have the following simple form in the homotopy category in 
\w[:]{\ho\Ch\approx\grV}
\myqdiag[\label{eqexpminalh}]{
& 0 \ar[ddrr] \ar[rr] && 0 \ar[r] & \cdots \ar[r] & 0 \ar[ddrr] \ar[rr] && 0 \ar[dr] \\ 
K(V,0) \ar[dr] \ar[ur]  & & &&  &  &&& K(V,n) \\ 
& 0 \ar[uurr]|\hole \ar[rr] & & 0 \ar[r] & \cdots \ar[r] & 0 \ar[uurr]|\hole \ar[rr] \ar[rr] && 0 \ar[ur]}
\noindent (for a fixed vector space $V$). This may be thought of as a stripped-down version of the \wwb{n+1}cube diagram of \S \ref{scube}, with identical higher order information. 

The following diagram \w{\fM{n}} is a cofibrant version in \w{\Ch} of diagram \wref{eqexpminalh} (except for the fact that we may choose a minimal model of the final object 
\w[):]{K(V,n)}
\myqdiag[\label{eqcofm}]{
& CL^0 \ar[ddrr] \ar[rr] && CL^1 \ar[r] & \cdots \ar[r] & CL^{n-2} \ar[ddrr] \ar[rr] && CL^{n-1} \ar[dr] \\ K(V,0) \ar[dr] \ar[ur]  & & &&  &  &&& K(V,n) \\  & C'L^0 \ar[uurr]|\hole \ar[rr] & & C'L^1 \ar[r] & \cdots \ar[r] & C'L^{n-2} \ar[uurr]|\hole \ar[rr] \ar[rr] && C'L^{n-1} \ar[ur]
}
\noindent The various cones (representing the zeros of \wref[)]{eqexpminalh} are given explicitly as follows:
let \w{V'} be an isomorphic copy of $V$. For each \w[,]{k\geq 0} we have a model \w{L\sp{k}} for \w{K(V,k)} with \w{(L\sp{k})\sb{i}=V(i) \oplus V'(i)} (two isomorphic copies of $V$) for \w{1 \le i \le k} and 
\w{(L\sp{k})\sb{0}=V} (and the obvious differentials). Thus \w{CL^{k}} has a new cone cell \w{V(k+1)} in degree \w[,]{k+1} with the isomorphic cone \w{C'L^{k}}  having \w{V'(k+1)} instead of \w[.]{V(k+1)} 

Assuming \w[,]{n>1} the global derived diagram \w{\Der{1} \fM{n}} is given by
\mywdiag[\label{eqderonem}]{
& \Sigma V\ar[r]\sp{\psi}  & 0 \ar[ddr]\ar[r] & 0 \cdots \ar[r] & 0\ar[ddr] \ar[r] & 
0 \ar[dr] \\ 
V \ar[ur]^{\hdtw} \ar[urr]\sb(0.6){d\sp{2}} \ar[dr]_{\hdtw} \ar[drr]\sp(0.6){d\sp{2}}
&&&&&& H_{1}K(V,n)=0~. \\ 
 & \Sigma V' \ar[r]\sb{\psi'} & 0  \ar[uur] \ar[r] & 0 \cdots \ar[r] & 
 0\ar[uur]  \ar[r] & 0 \ar[ur] &
}
\noindent We have one copy of \w{\Sigma V} for each of the two targets of the formal differentials \w{\hdtw} 
(degree $1$ maps in \w[),]{\grV} with evaluation maps $\psi$ and \w{\psi'} (see \S \ref{dmaxcoll}) necessarily trivial (assuming \w[).]{n\geq 2} 

The $0$-connected cover of the \ww{\Ind}-completion \w{(\fM{n})^{[2]}\langle 1\rangle} is given
by:
$$
\xymatrix@R=10pt@C=3pt{
&  \fbox{$\tS L\sp{0}$} \ar[rr] && \tS CL^0 \ar@{-->}[dr]  \ar[r] & \cdots \ar[r] & 
\tS CL^{n-3}\ar@{-->}[dr]  \ar@/^5ex/[ddrr] \ar[rr] && \tS CL^{n-2} \ar@{-->}[dr] \ar@/^2ex/[drrr] \\ 
0 \ar[dr] \ar[ur]  &&   & & \fbox{$\tS L^1$} &    &  \fbox{$\tS L^{n-2}$}  \ar@{-->}[ur] \ar@{-->}[dr] &  & \fbox{$\tS L^{n-1}$} \ar@{-->}[rr]&& K(V,n)\\
&  \fbox{$\tS L\sp{0}$}  \ar[rr] & & \tS C'L^0  \ar@{-->}[ur]  \ar[r] & \cdots \ar[r] & \tS C'L^{n-3} \ar@{-->}[ur] \ar@/_5ex/[uurr]|(0.58)\hole|(0.74)\hole \ar[rr] \ar[rr] && \tS C'L^{n-2} \ar@{-->}[ur] \ar@/_2ex/[urrr]
}
$$
\noindent where $\tS$ is the reduced suspension (see \S \ref{snac}), and as before the new (homotopy) colimits are boxed.  The two copies of \w{\Sigma V} in \w{\Der{1}\fM{n}} re-appear here in the form of two copies of 
\w[,]{\tS L^0} as part of the \ww{\Ind}-completion. The intermediate pushouts \w[,]{\tS L^1}  \w[,]{\tS L^2} and so on should also be doubled, but as they are not needed at this stage we write each only once to avoid crowding the diagram.

The hybridization \w{\Hyb{1}\fM{n}} is given by:
\footnotesize{
\myxdiag[\label{eqhybonem}]{
&& \Sigma V\ar[rr]\sp(0.4){\psi} && \tS CL^0 \ar[dr]  \ar[rr] && \cdots \sh{}(CL^{n-3})\ar[dr]  \ar@/^4ex/[ddrr] \ar[rr] && \sh{}(CL^{n-2}) \ar[dr] \ar@/^2ex/[drrr] \\ 
V \ar[rru]^{\hdtw\sb{1}} \ar[rrd]_{\hdtw\sb{1}} 
&&&&&   \tS L^1 && \tS L^{n-2}  \ar[ur] \ar[dr] &  & \tS L^{n-1} \ar[rr] && K(V,n)\\
&&  \Sigma V'\ar[rr]\sb(0.4){\psi'} && \tS C'L^0  \ar[ur]  \ar[rr] && \cdots \sh{}(C'L^{n-3}) \ar[ur] \ar@/_4ex/[uurr]|(0.58)\hole|(0.7)\hole \ar[rr] \ar[rr] && \sh{}(C'L^{n-2}) \ar[ur] \ar@/_2ex/[urrr]
}
}
\normalsize{}

Note that the diagram \w{\Hyb{1}\fM{n}} is isomorphic to \w{\tS \fM{n-1}} (that is, \w{\fM{n-1}} shifted up one degree), with a  formal differential 
\w{\hdtw\sb{1}:K(V,0)\to \tS L^0=K(V,1)} of degree $1$
adjoined at the beginning (together with its factorization through two (trivial)
differentials of the form \w[,]{\psi\circ\hdtw:K(V,0)\to \tS CL^0} and their structure maps to \w[).]{\tS L^1}

By induction on \w[,]{n\geq 2} we find that \w{\Hyb{k} \fM{n}} is given by 
\footnotesize{
\myxdiag[\label{eqhybkm}]{
&&&&&& \sh{k} CL\sp{0} \ar[dr]  \ar[rr] && \cdots \sh{k} CL\sp{n-k-1} \ar[dr] \ar@/^2ex/[drrr] \\
V \ar[rr]\sp-{\hdtw_1} &&\cdots \ar[rr]\sp-{\hdtw_{k}} && \sh{k}V  \ar[urr] \ar[drr]  & && \sh{k} L^1 &    &   \sh{k} L\sp{n-k} \ar[rr]\sp{\psi} && K(V,n)~.\\ 
&&&&&& \sh{k} C'L\sp{0}  \ar[ur]  \ar[rr] && \cdots \sh{k} C'L\sp{n-k-1}) \ar[ur] \ar@/_2ex/[urrr]
}
}
\normalsize{}
\noindent Here \w{\hdtw\sb{i}} is just \w{\hdtw} for the $i$-hybridization \w[,]{\Hyb{i}\fM{n}} 
and we have omitted the factorization depicted on the left in \wref[,]{eqhybonem} collapsing the two copies into one.

The general \wwb{k+1}derived diagram  \w{\Der{k+1} \fM{n}= (\Hyb{k}\fM{n})'} 
for \w{k<n} is given by
\footnotesize{
\myydiag[\label{eqderkm}]{
&& &   &\sh{k+1}V\ar[r]^-{\psi} & 0   \ar[r] \ar[rdd] & \cdots \ar[r]  & 0  \ar[dr] \\ 
V \ar[r]^-{\hdtw_1} &\sh{} V \cdots \sh{k-1}V \ar[r]^-{\hdtw\sb{k}} & \sh{k}V  \ar[rru]^-{\hdtw} \ar[rrd]_-{\hdtw}  & & &&  &    &   H\sb{k+1} K(V,n)~. \\ 
 && & &\sh{k+1}V\ar[r]_-{\psi'}& 0    \ar[r] \ar[ruu] & \cdots \ar[r]  & 0  \ar[ur]
}
}
\normalsize{}

The only derived diagram with non-trivial values for its differentials is \w[:]{\Der{n} \fM{n} } 
\myydiag[\label{eqdermm}]{
V \ar[rr]\sp{\hdtw_1} && \Sigma\sp{1} V \ar[r] & \cdots\ar[r] & \Sigma\sp{n-1} V \ar[rr]\sp{\hdtw_{n}} && \Sigma\sp{n}V    \ar[rr]\sp{\psi} && \Sigma\sp{n} V~. 
}
\noindent In this case we are recording the existence of a \ww{d\sp{n+1}}-differential (see \S \ref{sthreediff}):
the formal part of this differential is given by the composite
\w[,]{\hdtw_n \circ \cdots \circ  \hdtw_1} and its value is determined by 
post-composition with $\psi$ (which we assume to be an isomorphism in the ``standard version'' of \w[).]{\fM{n}}
\end{mysubsection}

%
%
\sect{Spectral sequences}\label{css}
As our terminology suggests, the approach described here may be helpful in understanding spectral sequences of chain complexes. 
For simplicity, we concentrate mainly on the simplest case:  that of a double complex (see \cite[III, \S 13]{BTuD}).

\begin{mysubsection}{Double complexes and cubical diagrams}
\label{sdccd}
A double complex \w{\Css} can be thought of as a linear diagram
\begin{myeq}\label{eqlindiag}
\dotsc C\sb{n}\xra{\pd{n}}C\sb{n-1}\xra{\pd{n-1}}\dotsc C\sb{1}\xra{\pd{1}}C\sb{0}\dotsc
\end{myeq}
\noindent in \w[,]{\Ch} satisfying 
\begin{myeq}\label{eqdsqzero}
\pd{n-1}\circ\pd{n}=0\hsm \text{for each}\hsm  n~.
\end{myeq}

To simplify matters, we consider only a finite segment of the (possibly bi-infinite)
double complex \wref{eqlindiag} \wwh say, one of length $n$, from \w{C\sb{n}} to 
\w{C\sb{0}} (inclusive).  As we shall see below, this suffices to determine the differentials up to order $n$ originating in \w[;]{C\sb{n}} if we wish to determine 
\w{E\sp{r}\sb{n\ast}} for \w[,]{r\leq n+1} we must double the length of the segment, and start it at \w[.]{C\sb{2n}} 

However, according to the results of Section \ref{cdd}, the diagram \wref{eqlindiag} as such is completely determined by its primary structure (that is,
the diagram \w{\Hs C\sb{n}\to\Hs C\sb{n-1} \to \dotsc \to \Hs C\sb{0}} in \w[),]{\grV}
since its indexing category $\sI$ is linear. The reason is that we have not recorded
\wref{eqdsqzero} in diagrammatic form: in order to do so, we must use
cubical indexing diagrams.

Given such a segment \w[,]{C\sb{n}\to C\sb{n-1}\to\dotsc C\sb{1}\to C\sb{0}} we define
the \emph{associated cube diagram} to be \w{\fX:\sI\sp{n}\to\Ch} defined by \w{\fX(J\sb{A\sb{i}})=C\sb{i}} for \w{J\sb{A\sb{i}}=(1\dotsc 10\dotsc 0)} consisting of $i$ digits $1$ followed by \w{n-i} zeros, and  \w{\fX(J\sb{A})=0} for all other $A$ 
(see \S \ref{scube}). The unique morphism in \w{\sI\sp{n}} from \w{J\sb{A\sb{i}}} to \w{J\sb{A\sb{i-1}}} is sent to \w[.]{\pd{i}:C\sb{i}\to C\sb{i-1}} 
Evidently these diagrams commute strictly, given \wref[.]{eqdsqzero}
\end{mysubsection}

\begin{example}\label{egss}
In \wref{eqthreecube} we depict the associated cube diagrams for \w[.]{n=2,3} 

\myqdiag[\label{eqthreecube}]{
%
%
\stackrel{\mbox{$C\sb{2}$}}{\mbox{\scriptsize{(11)}}}
\ar@<2ex>[rrr] \ar[dd]\sp(0.4){\pd{2}}
&&&
\stackrel{\mbox{$0$}}{\mbox{\scriptsize{(01)}}}  \ar[dd]
&&&&&
%
%
\stackrel{\mbox{$C\sb{3}$}}{\mbox{\scriptsize{(111)}}}
\ar@<2ex>[rrr] \ar@<1ex>[rrd]\ar[dd]\sp(0.4){\pd{3}}
&&&
\stackrel{\mbox{$0$}}{\mbox{\scriptsize{(101)}}}  \ar[dd]|(.34){\hole}  
\ar@<1ex>[rrd]
&&&\\
%
%
&&&&&&&& && \stackrel{\mbox{$0$}}{\mbox{\scriptsize{(011)}}} 
\ar@<2ex>[rrr] \ar[dd]
&&& \stackrel{\mbox{$0$}}{\mbox{\scriptsize{(001)}}} \ar[dd]
\\
%
%
\stackrel{\mbox{$C\sb{1}$}}{\mbox{\scriptsize{(10)}}}
\ar@<2ex>[rrr]\sp{\pd{1}}
&&&
\stackrel{\mbox{$C\sb{0}$}}{\mbox{\scriptsize{(00)}}}
&&&&&
%
%
\stackrel{\mbox{$C\sb{2}$}}{\mbox{\scriptsize{(110)}}}
\ar@<2ex>[rrr]|(.59){\hole}\sp(0.4){\pd{2}}  \ar@<1ex>[rrd]
&&&
\stackrel{\mbox{$C\sb{1}$}}{\mbox{\scriptsize{(100)}}} 
  \ar@<1ex>[rrd]\sp{\pd{1}} \\
%
%
&&&&&&&& && \stackrel{\mbox{$0$}}{\mbox{\scriptsize{(010)}}}
\ar@<2ex>[rrr]
&&&
\stackrel{\mbox{$C\sb{0}$}}{\mbox{\scriptsize{(000)}}}
}

\begin{enumerate}
\renewcommand{\labelenumi}{\Roman{enumi}.~}
\item The $2$-dimensional cube:

For $\fX$ the left diagram in \wref[,]{eqthreecube} the derived diagram \w{\Der{1}\fX}
is depicted in \wref[,]{eqcubea} in which the outer square shows the primary information
in degree $0$ (omitting the same information in degree $1$ in the interests of readability).
The inner diagonal part of the diagram shows the formal differential 
\w[,]{\hdtw=\hdt{01,10}} defined only on the common kernel of the top and left maps,
followed by its value $\psi$ into \w[.]{H\sb{1}C\sb{0}} The composite of these two is in fact the usual differential \w[,]{d\sp{2}:E\sp{2}\sb{2,0}\to E\sp{2}\sb{0,1}} 
since in this simple example \w{E\sp{2}\sb{2,0}=\Ker(H\sb{0}(\pd{2}))} (see \S \ref{sdss}).

\myudiag[\label{eqcubea}]{
H\sb{0}C\sb{2} \ar[dddd]\sb{H\sb{0}(\pd{2})} \ar[rrrr] &&&& 0 \ar[dddd] \\
& \Ker(H\sb{0}(\pd{2})) \ar@{_{(}->}[lu]^{i} \ar[rd]\sp{\hdtw}\sb{\cong} &\\
&& \Sigma\Ker(H\sb{0}(\pd{2})) \ar[rd]\sp(0.6){\psi} &\\ 
&&&H\sb{1}C\sb{0} \ar@{}[rd]\sp{\oplus} &\\
H\sb{0}C\sb{1} \ar[rrrr]\sb{H\sb{0}(\pd{1})} &&&& H\sb{0}C\sb{0}
}

In this small example we could take the direct sum of diagrams as in 
\wref{eqcubea} in all degrees, thus reproducing all the secondary information in one amalgamated derived diagram (see Remark \ref{roas}).

\item The $3$-dimensional cube:

The full derived diagrams for $\fX$ in the right cube of \wref{eqthreecube} is complicated to describe, but on the assumption that all homology groups vanish except \w[,]{H\sb{0}C\sb{3}} \w[,]{H\sb{1}C\sb{1}} and \w[,]{H\sb{2}C\sb{0}} so necessarily 
\w[.]{H\sb{0}(\pd{3})=0}  In this case the relevant part of \w{\Der{1}\fX} 
is shown in \wref[:]{eqcubeb} 

\myvdiag[\label{eqcubeb}]{
&& \Sigma V \ar[r]\sp{\psi\sb{011,101}}& 0\\
V=H\sb{0}C\sb{3} \ar[rru]\sp{\hdtw}\sb{\cong}
\ar[rr]\sp(0.7){\hdtw}\sb(0.7){\cong}
\ar[rrd]\sp{\hdtw}\sb{\cong} && \Sigma V \ar[r]\sp{\psi\sb{110,101}}& H\sb{1}C\sb{1} \\
&& \Sigma V \ar[r]\sp{\psi\sb{110,011}}& 0
}
\noindent In principle, we have a similar diagram for the bottom, front and right facets, which we may disregard for our purposes.

If we further assume that \w{\psi\sb{110,101}=0} (so \w{d\sp{2}:E\sp{2}\sb{3,0}\to E\sp{2}\sb{1,1}} vanishes), we are in the case of \wref[,]{eqexpcubec} and can proceed to 
the next derived diagram as explained there.
\end{enumerate}
\end{example}

\begin{mysubsection}{Differentials in spectral sequences}
\label{sdss}
The justification for our claim that the associated cube diagrams are the correct way to represent
the spectral sequence of a double complex is the fact that they can be used to derive the
successive differentials in this spectral sequence.

Following \cite{BMeadA}, we may think of the differentials in the spectral sequence of a filtered chain complex (or more generally, a simplicial object in a pointed model or $\infty$-category) in terms of values of higher Toda brackets, in the sense of \cite{BBSenT} or \cite{KharoH}.
More precisely, rather than describing the differential as a linear map
\w[,]{d\sp{r}:E\sp{r}\sb{\ast\ast}\to E\sp{r}\sb{\ast\ast}} we think of it as a ``relation''
in the sense of \cite[\S 3]{BKaSQ}: that is, we  take a representative in 
\w{\alpha\in E\sp{1}\sb{pq}} of the class \w[,]{\lra{\alpha}\in E\sp{r}\sb{pq}} and look for any representative \w{\beta\in E\sp{1}\sb{\ast\ast}} of \w{d\sp{r}\lra{\alpha}} (with
the understanding that we have used all lower choices to determine the subquotient
\w{E\sp{r}\sb{pq}} of \w[).]{E\sp{1}\sb{pq}}

It turns out to be more convenient to use the fibrant, rather than cofibrant, replacement $\fX$ of the associated cube diagrams \wref{eqthreecube} in the injective model category structure on \w{\Ch\sp{\sI\sp{n}}} (see Remark \ref{rfibrant} above). We can then form the \emph{extended cube diagram} $\tX$, indexed by an $n$-dimensional cube with sides of length $3$, obtained by taking the (homotopy) fibers of all maps (see \wref[).]{eqtodasq}

\begin{remark}\label{rfibers}
Formally, this means that we add arrows from the zero object to all targets of fibrations, and then take the corresponding (homotopy) limits. We do not display these explicitly in the extended cube diagrams for reasons of clarity only.
\end{remark}

Finally, using \w{A=\Sigma\sp{p}\bF=K(\bF,p)} to co-represent 
\w[,]{\alpha\in E\sp{1}\sb{pq}=H\sb{p}C\sb{q}} in \w{\ho\Ch} we define the \emph{enhanced cube diagram} for $\fX$ to be \w{\tX\sp{A}} (using the internal \w{\Hom} in \w[).]{\Ch}

When \w{n=2} (the first interesting case), the extended diagram for
\w{C\sb{2}\xra{\pd{2}} C\sb{1}\xra{\pd{1}}C\sb{0}} is
\mydiagram[\label{eqtodasq}]{
\stackrel{\mbox{$Z\sb{2}=\Fib{g\sb{2}}$}}{\mbox{\scriptsize{(22)}}}
\ar@<2ex>@{>->}[rr]\sp{i\sb{2}}  \ar@{>->}[d]\sb(0.4){v\sb{2}}
&&
\stackrel{\mbox{$\Fib{\tpd{2}}$}}{\mbox{\scriptsize{(12)}}}
\ar@{>->}[d] \ar@<2ex>@{->>}[rr]\sp{k\sb{2}}
&&
\stackrel{\mbox{$\Omega C\sb{0}$}}{\mbox{\scriptsize{(02)}}} \ar@{>->}[d] \\
\stackrel{\mbox{$C\sb{2}$}}{\mbox{\scriptsize{(21)}}}
\ar@{->>}[d]\sb(0.3){g\sb{2}} \ar@<2ex>@{>->}[rr]\sp{j}\sb{\simeq}
\ar@<2ex>[rrd]\sp{\pd{2}}
&&
\stackrel{\mbox{$C'\sb{2}$}}{\mbox{\scriptsize{(11)}}}
\ar@{->>}[d]\sp(0.4){\tpd{2}} \ar@<2ex>@{->>}[rr]\sp{G}
&&
\stackrel{\mbox{$PC\sb{0}$}}{\mbox{\scriptsize{(01)}}} \ar@{->>}[d]\sp(0.4){p} \\
\stackrel{\mbox{$Z\sb{1}=\Fib{\pd{1}}$}}{\mbox{\scriptsize{(20)}}}
\ar@<2ex>@{>->}[rr]\sb{v\sb{1}}
&&
\stackrel{\mbox{$C\sb{1}$}}{\mbox{\scriptsize{(10)}}} \ar@<2ex>@{->>}[rr]\sb{\pd{1}}
&&
\stackrel{\mbox{$C\sb{0}$}}{\mbox{\scriptsize{(00)}}}
}
\noindent The lower right square is the fibrant version of the associated cubical diagram 
for \w[,]{\Css} with the nullhomotopy $G$ as witness for the fact that
\w{\pd{1}\circ\tpd{2}\sim 0} (the composite is no longer strictly zero in the fibrant replacement).
See \cite[(0.3)]{BSenH} and compare \cite[(1.17)]{BBSenT}.

Now an element \w{\varphi\in H\sb{i}C\sb{2}} is represented by a map \w[.]{f:K(\bF,i)\to C\sb{2}}
In order to carry out a diagram chase on $\varphi$ mapping into \wref[,]{eqtodasq} it is convenient to work 
with a point-set model of the mapping object from \w{A:=K(\bF,i)} into \wref[,]{eqtodasq} as follows:
\myvdiag[\label{eqtwotodasq}]{
\stackrel{\mbox{$\stackrel{\exists?\,[h']}{\vin}$}}
           {\stackrel{\mbox{$Z\sb{2}\sp{A}$}}{\mbox{\scriptsize{(22)}}}}
\ar@<6.5ex>@{|->}[rr]
\ar@<2ex>@{>->}[rr]\sp{i\sb{2}}  \ar@{>->}[d]\sb(0.4){v\sb{2}\sp{A}}
&&
\stackrel{\mbox{\hspace*{-4mm}$\stackrel{[h]}{\vin}$}}{\stackrel{\mbox{$\Fib{\pd{2}}\sp{A}$}}
  {\mbox{\hspace*{5mm}\scriptsize{(12)}}}}
\ar@<-2ex>@{|->}[d]\ar@<2ex>@{>->}[d] \ar@<2ex>@{->>}[rr]\sp{k\sb{2}\sp{A}}
\ar@<6.5ex>@{|->}[rr]
&&
\stackrel{\mbox{$\hspace*{-4mm}\stackrel{\tau\stackrel{?}{\sim}\ast}{\vin}$}}{\stackrel{\mbox{$\Omega C\sb{0}\sp{A}$}}{\mbox{\scriptsize{(02)}}}} \ar@{>->}[d] \\
\stackrel{\mbox{$C\sb{2}\sp{A}$}}{\mbox{\scriptsize{(21)}}}
\ar@{->>}[d]\sb(0.3){g\sb{2}\sp{A}} \ar@<2ex>[rr]\sp{\simeq}
&&
\stackrel{\mbox{$[f]\in C\sb{2}\sp{A}$}}
  {\hspace*{5mm}\mbox{\scriptsize{(11)}}}
  \ar@<-2ex>@{|->}[d]\ar@<2ex>@{->>}[d]\sp(0.3){\pd{2}\sp{A}} \ar@<2ex>[rr]
&&
\stackrel{\mbox{$\ast$}}{\mbox{\scriptsize{(01)}}} \ar@{->>}[d]\sp(0.3){p\sp{A}} \\
\stackrel{\mbox{$Z\sb{1}\sp{A}$}}{\mbox{\scriptsize{(20)}}}
\ar@<2ex>@{>->}[rr]\sb{v\sb{1}}
&&
\stackrel{\mbox{$0\in C\sb{1}\sp{A}$}}{\mbox{\scriptsize{(10)}}}
\ar@<2ex>@{->>}[rr]\sb{\pd{1}\sp{A}}
&&
\stackrel{\mbox{$C\sb{0}\sp{A}$}}{\mbox{\scriptsize{(00)}}}.
}
\noindent Note that the columns and rows of \wref{eqtwotodasq} are still (homotopy) fibration
sequences. We see that \w{\varphi=[f]} is represented in the central slot, and 
the fact that it is a cycle under \w{d\sp{1}:H\sb{i}C\sb{2}\sp{A}\to H\sb{i}C\sb{1}\sp{A}} means that it maps 
to zero in the central column, thus lifting to \w{h:A\to\Fib{\pd{2}}} \wwh with the choice of lift representing the indeterminacy in choosing the representative for \w{d\sp{2}(\varphi)} in
\w[,]{E\sp{1}\sb{0,i+1}} namely, \w[.]{k\sb{2}\sp{A}\circ h} If this vanishes, \w{[h]} lifts to 
\w{[h']} in the upper left  corner \wh which in this simple example represents the \w{E\sp{\infty}\sb{2,i}} term of the spectral sequence.
\end{mysubsection}

\begin{mysubsection}{Filtered complexes}
\label{sfcomp}
More generally, the spectral sequence associated to a filtered chain complex
\mysdiag[\label{eqfconp}]{
\dotsc & F\sb{0}~~  \ar@{^{(}->}[r]^{\iota\sb{0}} &  ~~F\sb{1}~~  \ar@{^{(}->}[r]^{\iota\sb{1}} &  ~~F\sb{2}~~ & \dotsc &  ~~F\sb{n}~~\ar@{^{(}->}[r]^{\iota\sb{n}} &  ~~F\sb{n+1}  & \dotsc
}
\noindent is actually that associated to a linear diagram of the form \wref[,]{eqlindiag} in which 
\w{C\sb{n}=\Sigma\sp{-n}(F\sb{n}/F\sb{n-1})} for each $n$.
The structure map \w{\partial\sb{n}:C\sb{n}\to C\sb{n-1}} for this diagram
is induced (under $n$-fold desuspension) by the connecting homomorphism \w{\overline{\partial}} for the cofiber sequence
\mysdiag[\label{eqcofseq}]{
F\sb{n-1}~~\ar@{^{(}->}[rr]^{\iota\sb{n-1}} &&  F\sb{n}  \ar@{->>}[rr]^-{q\sb{n}} && 
F\sb{n}/F\sb{n-1}~=~\Sigma\sp{n}C\sb{n} \ar[rr]\sp-{\overline{\partial}} && \Sigma F\sb{n-1}~,
}
\noindent followed by the (suspended)  quotient map 
\w[.]{\Sigma q\sb{n-1}:\Sigma F\sb{n-1}\to\Sigma\sp{n} C\sb{n-1}} 

As in \S \ref{sdccd}, to encode the correct homotopy type of the filtered complex, we must expand each finite $n$-segment of \wref{eqfconp} into the associated $n$-cube diagram, as shown in \wref{eqtwothreecube} for \w[.]{n=2,3} Note that we use here the indexing needed for the cofibration version, which is more
appropriate for our description of the spectral sequence, using cofibers.

\myqdiag[\label{eqtwothreecube}]{
%
%
\stackrel{\mbox{$F\sb{0}$}}{\mbox{\scriptsize{(00)}}}
\ar@<2ex>[rrr]\sp{\iota\sb{1}\circ\iota\sb{0}} \ar[dd]\sp(0.4){\iota\sb{0}}
&&&
\stackrel{\mbox{$F\sb{2}$}}{\mbox{\scriptsize{(10)}}}  \ar[dd]\sb{=}
&&&&&
%
%
\stackrel{\mbox{$F\sb{0}$}}{\mbox{\scriptsize{(000)}}}
\ar@<2ex>[rrr]\sp{\iota\sb{1}\circ\iota\sb{0}}  \ar@<1ex>[rrd]\sp{\iota\sb{2}\circ\iota\sb{1}\circ\iota\sb{0}}  \ar[dd]\sp(0.4){\iota\sb{0}}
&&&
\stackrel{\mbox{$F\sb{2}$}}{\mbox{\scriptsize{(010)}}}  \ar[dd]|(.34){\hole}^{=}  
\ar@<1ex>[rrd]\sp{\iota\sb{2}} 
&&&\\
%
%
&&&&&&&& && \stackrel{\mbox{$F\sb{3}$}}{\mbox{\scriptsize{(100)}}} 
\ar@<2ex>[rrr]^(0.3){=} \ar[dd]
&&& \stackrel{\mbox{$F\sb{3}$}}{\mbox{\scriptsize{(110)}}} \ar[dd]^{=}
\\
%
%
\stackrel{\mbox{$F\sb{1}$}}{\mbox{\scriptsize{(01)}}}
\ar@<2ex>[rrr]\sp{\iota\sb{1}}
&&&
\stackrel{\mbox{$F\sb{2}$}}{\mbox{\scriptsize{(11)}}}
&&&&&
%
%
\stackrel{\mbox{$F\sb{1}$}}{\mbox{\scriptsize{(001)}}}
\ar@<2ex>[rrr]|(.59){\hole}\sp(0.4){\iota\sb{1}}  \ar@<1ex>[rrd]\sp{\iota\sb{2}\circ\iota\sb{1}} 
&&&
\stackrel{\mbox{$F\sb{2}$}}{\mbox{\scriptsize{(011)}}} 
  \ar@<1ex>[rrd]\sp{\iota\sb{2}} \\
%
%
&&&&&&&& && \stackrel{\mbox{$F\sb{3}$}}{\mbox{\scriptsize{(101)}}}
\ar@<2ex>[rrr]^{=}
&&&
\stackrel{\mbox{$F\sb{3}$}}{\mbox{\scriptsize{(111)}}}
}

The extended diagram for the right-hand associated cube in \wref{eqtwothreecube} contains the data needed to recover the left-hand square of \wref{eqthreecube} (which is the (unextended) associated cube diagram of the appropriate segment of \wref[).]{eqlindiag} 
Note that in order to extract the data needed for an $n$ segment of the corresponding \wref[,]{eqlindiag} 
(and the associated $n$-dimensional cube), we need an \wwb{n+1}segment of \wref[,]{eqfconp} and thus the associated
\wwb{n+1}cube diagram. For example, the square \wref[,]{eqtwoextend} associated to the left square in 
\wref[,]{eqtwothreecube} only yields the single arrow \w[.]{\partial\sb{2}:C\sb{2}\to C\sb{1}}
Note also that in order to conform to the usual conventions for filtered complexes, we depict here the version with all rows and columns (homotopy) cofibration sequences.

\mydiagram[\label{eqtwoextend}]{
\stackrel{\mbox{$F\sb{0}$}}{\mbox{\scriptsize{(00)}}}
\ar@<2ex>@{>->}[rr]\sp{\iota\sb{1}\circ\iota\sb{0}}  
\ar@{>->}[d]\sb(0.4){\iota\sb{0}}
&&
\stackrel{\mbox{$F\sb{2}$}}{\mbox{\scriptsize{(10)}}}
\ar@{>->}[d]\sp{=} \ar@<2ex>@{->>}[rr]\sp{k\sb{2}}
&&
\stackrel{\mbox{$F\sb{2}/F\sb{0}$}}{\mbox{\scriptsize{(20)}}} \ar@{>->}[d] \\
\stackrel{\mbox{$F\sb{1}$}}{\mbox{\scriptsize{(01)}}}
\ar@{->>}[d]\sb(0.3){q\sb{1}} \ar@<2ex>@{>->}[rr]\sp{\iota\sb{1}}
&&
\stackrel{\mbox{$F\sb{2}$}}{\mbox{\scriptsize{(11)}}}
\ar@{->>}[d] \ar@<2ex>@{->>}[rr]\sp{q\sb{2}}
&&
\stackrel{\mbox{$\Sigma\sp{2}C\sb{2}=F\sb{2}/F\sb{1}$}}{\mbox{\scriptsize{(21)}}} 
\ar@{->>}[d]\sp(0.4){\Sigma\sp{2}\partial\sb{2}} \\
\stackrel{\mbox{$\Sigma C\sb{1}=F\sb{1}/F\sb{0}$}}{\mbox{\scriptsize{(02)}}}
\ar@<2ex>@{>->}[rr]
&&
\stackrel{\mbox{$0$}}{\mbox{\scriptsize{(12)}}} \ar@<2ex>@{->>}[rr]
&&
\stackrel{\mbox{$\Sigma\sp{2}C\sb{1}$}}{\mbox{\scriptsize{(22)}}}
}
\end{mysubsection}

%
%

\sect{Further directions}
\label{cfd}
So far we have discussed a very specific type of diagrams - those indexed by finite partially ordered sets. However, this case is the most basic example, and should offer a useful guidepost for more general cases.

\begin{mysubsection}{Parallel arrows}
\label{spa}
If we assume that the indexing category $\sI$ is still directed (and finite), but allows parallel arrows, the diagrams become more complicated in terms of the linear algebra (or representation theory) needed to understand the primary information (that is, the diagram \w[).]{H\sb{\ast}\fX:\sI\to\grV}

However, the changes needed to deal with the \emph{higher order} information, which are our main concern here, are minor. This is because if $\fX$ is cofibrant, then given two (or more) arrows \w{f,g:\alpha\to\beta} in $\sI$, on \w[,]{V:=\Ker(f\sb{\ast})\cap\Ker(g\sb{\ast})\subseteq H\sb{k}\fX(\alpha)} say, we will have disjoint copies of the cone \w{CV} in \w[.]{\fX(\beta)} 
Thus the (homotopy) colimit of the diagram restricted to $V$ will be the pushout \w[,]{\Sigma V}
as in \S \ref{spairs}. In effect, we may replace the indexing category $\sI$ by an expanded version
\w{\sI'} in which, in addition to $\beta$ (serving as a target of the non-zero part of the maps 
\w{f\sb{\ast}} and \w[),]{g\sb{\ast}} we have two new copies \w{\beta\sb{f}} and \w[,]{\beta\sb{g}}
at which we place the respective cones \w{CV} to obtain \w{\fX':\sI'\to\Ch} (as in the construction of the expanded derived diagram in \S \ref{dedd}). Note that the collapsing functor \w{c:\sI'\to\sI} allows us to map \w{\fX'} to \w[,]{c\sp{\ast}\fX} and thus compare the two.
\end{mysubsection}

\begin{mysubsection}{Infinite diagrams}
\label{sidiag}
Infinite directed diagrams may in principle be treated as the colimits of their finite segments. However, the case when $\sI$ is not directed \wh even in the simple case of a single non-trivial self map \wh can be much more complicated. One may try to approximate a diagram \w{\fX:\sI\to\Ch} by finite segments \wh e.g., consider only a finite number of iterations of a self map, thought of as a diagram indexed by a finite linearly ordered set. However, as the case of homotopy actions of the cyclic group \w{C\sb{2}} shows, these finite approximations may not provide the full picture.
\end{mysubsection}

\end{document}